%
%
%
%


\newcommand{\bbC}{\mathbb{C}}

\newcommand{\bbQ}{\mathbb{Q}}

\newcommand{\bbZ}{\mathbb{Z}}

\newcommand{\Aut}{\textup{Aut}}
\newcommand{\Tr}{\textup{Tr}}

\newcommand{\Pic}{\textup{Pic}}




\overfullrule=0pt
\documentclass[11pt]{amsart}
\usepackage{amscd, amsmath, amsthm, amssymb}
\newtheorem{theorem}{Theorem}[section]
\newtheorem{lemma}[theorem]{Lemma}
\newtheorem{proposition}[theorem]{Proposition}

\theoremstyle{definition}     

\newtheorem{remark}[theorem]{Remark}
\numberwithin{equation}{section}




\begin{document}

\title[K3 surfaces with an automorphism of order 66]{K3 surfaces with an automorphism of order 66, the maximum possible}

\author[J. Keum]{JongHae Keum}
\address{School of Mathematics, Korea Institute for Advanced Study, Seoul 130-722, Korea } \email{jhkeum@kias.re.kr}
\thanks{Research supported by National Research Foundation of Korea (NRF-2007-0093858).}

\subjclass[2000]{Primary 14J28, 14J50, 14J27}

\date{Feb 2014}
\begin{abstract} In each characteristic $p\neq 2, 3$, it was shown in a previous work that the order of an automorphism of a K3 surface is bounded by 66, if finite. Here, it is shown that in each characteristic $p\neq 2, 3$ a K3 surface with a cyclic action of order $66$ is unique up to isomorphism. The equation of the unique surface is given explicitly in the tame case ($p\nmid 66$) and in the wild case ($p=11$).
\end{abstract}

\maketitle

 An automorphism of finite order
is called \emph{tame} if its order is prime to the characteristic, and \emph{wild} otherwise. Let $X$ be a K3 surface over an algebraically closed field $k$ of
characteristic $p\ge 0$. An automorphism $g$ of $X$ is called
\emph{symplectic} if it preserves a non-zero regular 2-form $\omega_X$, and
 \emph{purely non-symplectic} if no power of $g$ is symplectic except the identity. An automorphism of order a power of $p$ in  characteristic $p>0$ is symplectic, as there is no $p$-th root of unity.

In characteristic 0 or $p\neq 2, 3, 11$, Dolgachev (as recorded in Nikulin's paper \cite{Nik})  gave the first example of a K3 surface with an
automorphism of order 66. The K3 surface is the minimal model of the minimal resolution of the weighted hypersurface in ${\bf P}(33,22,6,1)$ defined by
\begin{equation}\label{Dolgaform}
x^2+y^3+z^{11}+w^{66}= 0
 \end{equation}
which has 3 singular points and whose minimal resolution has $K^2=-3$ (see \cite{HS}, p. 847). The affine model $x^2+y^3+z^{11}+1= 0$ is birational to $y^2+x^3+1-s^{11} = 0$, hence to the elliptic K3 surface in ${\bf P}(6,4,1,1)$
\begin{equation}\label{formula}
X_{66}: y^2+x^3+t_1^{12}-t_0^{11}t_1 = 0,
 \end{equation}
 as was later described by  Kond\=o \cite{Ko}. The surface $X_{66}$ has the automorphism
\begin{equation}\label{form2}
 g_{66}(x,y,t)=(\zeta_{66}^2x,\,\zeta_{66}^3y,\,\zeta_{66}^6t)
 \end{equation}
of order 66  where $t=t_1/t_0$ and $\zeta_{66}$ is a primitive 66th  root of unity.

In each characteristic $p\neq 2, 3$, it was shown in \cite{K} that the order of any automorphism of a K3 surface is bounded by 66, if finite. In this paper we characterize  K3 surfaces admitting a cyclic action of order $66$. 

For an automorphism $g$, tame or wild, of a K3 surface $X$, we
write
$${\rm ord}(g)=m.n$$ if $g$ is of order $mn$ and the homomorphism $\langle g\rangle\to {\rm GL}(H^0(X,
\Omega^2_X))$ has kernel of order $m$. 
A tame automorphism $g$ of order 66 of a K3 surface is purely non-symplectic  by \cite{K} (Lemma 4.2 and 4.4), i.e., $${\rm ord}(g)=1.66.$$

\begin{theorem} \label{main1} Let $k$ be the field  $\mathbb{C}$ of complex numbers or an algebraically closed
field of characteristic $p\neq 2, 3, 11$. Let $X$ be a K3 surface
defined over $k$ with an automorphism $g$ of order $66$. Then
$$(X, \langle g\rangle)\cong (X_{66}, \langle g_{66}\rangle),$$ i.e. there is an isomorphism $f:X\to X_{66}$ such that $f \langle g\rangle f^{-1}=\langle g_{66}\rangle$.
\end{theorem}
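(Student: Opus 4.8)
The plan is to reduce the whole problem to the order-$11$ and order-$6$ powers of $g$ and then to pin down an explicit $g$-equivariant Weierstrass model. Since $\mathrm{ord}(g)=1.66$, the automorphism $g$ acts on $H^0(X,\Omega^2_X)$ by a primitive $66$th root of unity $\xi$, so each power $g^k$ acts by $\xi^k$ and is again purely non-symplectic, of order $66/\gcd(k,66)$. In particular $\sigma:=g^6$ is purely non-symplectic of order $11$ and $\tau:=g^{11}$ is purely non-symplectic of order $6$, and since $\gcd(6,11)=1$ we have $\langle g\rangle=\langle\sigma\rangle\times\langle\tau\rangle$. First I would study $\sigma$ in isolation: by the theory of non-symplectic automorphisms of prime order (topological and holomorphic Lefschetz fixed-point formulas), together with the fact that $\phi(11)=10$ forces the $\sigma$-action on the transcendental part of $H^2$ to have rank $10$ or $20$, the fixed locus $X^\sigma$ and the $\sigma$-invariant lattice are severely constrained. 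The aim of this first step is to extract from $\sigma$ a $\langle g\rangle$-invariant genus-one fibration $\pi\colon X\to\bbP^1$ with a section, on whose base $\sigma$ acts with order $11$ and exactly two fixed points.

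Next I would exploit $\tau$. Because $\tau$ has order $6$ and the induced action on the base has order dividing $\gcd(6,11)=1$, $\tau$ preserves each fiber of $\pi$ and fixes the zero section, so it induces an order-$6$ automorphism of the generic fiber fixing the origin. An elliptic curve carrying an order-$6$ automorphism has $j=0$, hence $\pi$ may be written in Weierstrass form $y^2=x^3+b(t)$ with $\deg b=12$ (the value forced by $X$ being a K3). The residual order-$11$ symmetry of the base, $t\mapsto\eta t$ with $\eta$ a primitive $11$th root of unity, requires $b(\eta t)=\eta^{c}b(t)$ for a fixed residue $c$, so only the monomials $t^k$ with $k\equiv c\pmod{11}$ and $0\le k\le 12$ survive, namely $k\in\{1,12\}$. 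Thus $b(t)=\alpha t^{12}+\beta t$, and both coefficients must be nonzero (otherwise the singular fibers degenerate and $X$ fails to be a K3 with an order-$66$ action). Rescaling $t$, $x$, $y$ by the diagonal torus normalizes $b$ to $t^{12}-t$, which is precisely $X_{66}$. Tracking the weights of $\sigma$ and $\tau$ on $(x,y,t)$ then identifies $\langle g\rangle$ with $\langle g_{66}\rangle$, giving the desired equivariant isomorphism.

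The main obstacle is the very first step: producing the $g$-invariant elliptic fibration and proving that the fixed-point data of $\sigma$ leaves exactly one lattice type, so that no competing surface occurs. Over $\bbC$ this is a matter of lattice theory and the Torelli theorem, where the invariant lattice containing an isotropic class gives the pencil and the Mordell–Weil/section analysis supplies the Weierstrass model. The second obstacle is transferring this to characteristic $p\neq 2,3,11$: here I would replace Hodge-theoretic inputs by the analogous counts in $\ell$-adic and crystalline cohomology, using tameness of $g$ to guarantee that the quotient by $\langle\sigma\rangle$ and its resolution behave as in characteristic $0$, and noting that the Weierstrass normalization above is purely algebraic and valid whenever $p\neq 2,3$. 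A final check is that the section indeed exists (so that the genus-one fibration is genuinely elliptic) and that the two $\sigma$-fixed fibers account for all of $X^\sigma$, confirming $X\cong X_{66}$ with no extra moduli.
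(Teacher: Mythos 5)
Your overall plan (split $\langle g\rangle=\langle g^{6}\rangle\times\langle g^{11}\rangle$, extract an invariant elliptic pencil from the order-$11$ part, then force $j\equiv 0$ so that $b(t)=\alpha t^{12}+\beta t$) is close in spirit to the earlier complex-case proofs of Kond\=o and Machida--Oguiso, and your endgame is a legitimate, even slightly cleaner, version of the paper's Weierstrass step (the paper instead writes $y^2+x^3+Ax+B=0$ and kills $A$ by an orbit-counting argument). But the proposal has a genuine gap at its very first step, and it sits exactly where the paper does its real work. You never prove that the fibration, and in particular its section, are $\langle g\rangle$-invariant rather than merely $\langle g^{6}\rangle$-invariant. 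Knowing that $\sigma=g^{6}$ has $2$-dimensional invariant cohomology does not give this: $g$ could act on the rank-$2$ hyperbolic lattice $H^2(X)^{\sigma}$ fixing the invariant ample class but with eigenvalue $-1$ on its complement, i.e.\ swapping the two isotropic rays; this is precisely the unexcluded possibility $[g^{*}]=[1,\ \zeta_{66}:20,\ -1]$. In that case the isotropic class defining your pencil is not $g$-invariant, $\tau$ need not fix any section, and step 2 collapses. Eliminating this case is the content of the paper's Lemma \ref{g*}, which is a substantial argument (fixed loci of $g^{11}$, $g^{22}$, $g^{33}$, the Hodge index theorem, and the classification of order-$3$ automorphisms of $\bbP^2$); your proposal contains no substitute for it. Similarly, you leave open your ``rank $10$'' branch (i.e.\ $\sigma$-invariant rank $12$), whose exclusion needs Proposition \ref{integral}(3) --- that $\zeta_{66}$ itself occurs as an eigenvalue of $g^{*}$ --- which you never invoke; and even in the rank-$2$ case you do not distinguish the invariant lattices $U$ and $U(11)$, although only the first yields an invariant section.

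The second gap is that the transfer to characteristic $p$ is hand-waved exactly where the paper's novelty lies. The tools you rely on in step 1 --- the transcendental lattice, the holomorphic Lefschetz formula, and the Torelli theorem --- do not exist in positive characteristic: for $p\equiv -1\pmod{66}$ the surface $X_{66}$ is supersingular with Picard number $22$, so ``the transcendental part of $H^2$'' is meaningless there, and there is no Torelli-type statement converting lattice data into a pencil. The paper's characteristic-free substitute is not an $\ell$-adic analogue of your order-$11$ analysis at all: it applies the involution lemma (Lemma \ref{nsym2}) to $h=g^{33}$, which simultaneously produces the elliptic fibration as the pullback of the unique ruling of $X/\langle g^{33}\rangle\cong{\bf F}_4$ (so $g$-invariance is automatic, since $g$ acts on the quotient), the $g$-invariant section $R$, and the $3$-section $C_{10}$; it then pins down the twelve cuspidal fibres, the $11+1$ orbit structure, and the order-$11$ base action by Euler-characteristic bookkeeping ($e(g^{11})=14$) via the tame Lefschetz fixed point formula. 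Without this lemma, or an equally characteristic-free replacement, your argument proves at best the complex case; with it, your step 2 does go through essentially as you describe.
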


Over $k=\bbC$, Theorem \ref{main1} was proved by Kond\=o \cite{Ko} under the assumption that $g$ acts trivially on the Picard group of $X$, then  by Machida and Oguiso \cite{MO} under the assumption that $g$ is purely non-symplectic. Our proof is characteristic free and does not use the tools in the complex case such as transcendental lattice and the holomorphic Lefschetz formula.

The surface  $X_{66}$ is a weighted Delsarte surface. Using the algorithm  for determining the supersingularity (and the
Artin invariant) of such a surface whose minimal
resolution is a K3 surface (\cite{Shioda2}, \cite{Goto}), one can show that in characteristic $p\equiv -1$ (mod 66) the
surface $X_{66}$ is a supersingular K3 surface with Artin
invariant 1. Over $k=\bbC$, the Picard group of $X_{66}$ is a unimodular hyperbolic lattice of rank 2.

In characteristic $p=11$, there is an example of a K3 surface with a wild
automorphism of order 66 (\cite{DK3}, \cite{K}):
\begin{equation}\label{formula-11}
Y_{66}: y^2+x^3+t^{11}-t = 0,
 \end{equation}
\begin{equation}\label{form2-11}
 h_{66}(x,y,t)=(\zeta_{6}^2x,\,\zeta_{6}^3y,\,t+1)
 \end{equation}
 where $\zeta_{6}\in k$ is a primitive 6th  root of unity.
The surface is a supersingular K3 surface with Artin invariant 1 in characteristic $p=11$ (mod 12).

\begin{theorem} \label{main2} Let $k$ be an algebraically closed
field of characteristic $p=11$. Let $X$ be a K3 surface
defined over $k$ with an automorphism $g$ of order $66$. Then
 $${\rm  ord}(g)= 11.6$$ and
$$(X, \langle g\rangle)\cong(Y_{66}, \langle h_{66}\rangle),$$ i.e. there is an isomorphism $f:X\to Y_{66}$ such that $f \langle g\rangle f^{-1}=\langle h_{66}\rangle$.
\end{theorem}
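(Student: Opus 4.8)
The plan is to split $g$ into a wild part of order $11$ and a tame part of order $6$, use the structure theory of wild order-$11$ automorphisms to produce a $g$-equivariant genus-one fibration, and then force its Weierstrass equation. Write $G=\langle g\rangle\cong\bbZ/66\bbZ$ and let $\chi\colon G\to\GL(H^0(X,\Omega^2_X))=k^{*}$ be the character recording the action on the regular $2$-form. Because $\mathrm{char}\,k=11$, the group $k^{*}$ has no element of order $11$, so $\chi(G)$ is cyclic of order prime to $11$ and therefore dividing $6$; hence the symplectic subgroup $G_{s}=\ker\chi$ has order divisible by $11$ and contains $\langle g^{6}\rangle$, which in any case is symplectic as its order is a power of $p$.

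To establish $\mathrm{ord}(g)=11.6$ it suffices to show $G_{s}=\langle g^{6}\rangle$, i.e. that $\tau:=g^{11}$ (of order $6$) is purely non-symplectic. Set $\sigma:=g^{6}$, a wild symplectic automorphism of order $11$. By the analysis of wild $\bbZ/11$-actions on K3 surfaces in \cite{DK3} and \cite{K}, the pair $(X,\sigma)$ carries a $\sigma$-invariant genus-one fibration $\pi\colon X\to\bbP^{1}$ on whose base $\sigma$ acts as an order-$11$ additive translation, say $t\mapsto t+1$, with the unique fixed point $t=\infty$. Since $G$ is abelian, $g$ preserves $\pi$ and induces on the base an automorphism commuting with $t\mapsto t+1$; as this translation is unipotent with sole fixed point $\infty$, anything in $\PGL_2(k)$ commuting with it fixes $\infty$ and is again a translation, and translations in characteristic $11$ have order dividing $11$. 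Thus the base action of $g$ has order exactly $11$ and $\tau=g^{11}$ acts trivially on $\bbP^{1}$, restricting to an order-$6$ automorphism of the generic fibre. The only genus-one curve admitting an automorphism of order $6$ has $j=0$, and such an automorphism scales its invariant $1$-form by a primitive $6$th root of unity; since $\tau$ fixes $t$, it scales $\omega_X$ by the same root. Hence $\tau$ is purely non-symplectic of order $6$, so $\chi(G)$ has order $6$ and $\mathrm{ord}(g)=11.6$.

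It remains to pin down the fibration. The generic fibre has constant $j$-invariant $0$, so, after producing a section, $\pi$ is a Weierstrass fibration $y^{2}+x^{3}+f(t)=0$ on which $\tau$ acts as $(x,y)\mapsto(\zeta_{6}^{2}x,\zeta_{6}^{3}y)$ and $g$ acts by this together with $t\mapsto t+1$. Invariance of the equation then forces $f(t+1)=f(t)$, because $x^{3}$ and $y^{2}$ are fixed by the scalings. The count $e(X)=24$, together with the fact that $\sigma$ acts freely on $\bbP^{1}\setminus\{\infty\}$ so that the singular fibres over the affine line occur in orbits of size $11$, forces $\deg f=11$; among degree-$11$ polynomials the condition $f(t+1)=f(t)$ selects, up to an affine reparametrisation of $t$ and a rescaling of $(x,y)$, the additive polynomial $f(t)=t^{11}-t=\prod_{a\in\bbF_{11}}(t-a)$. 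This is exactly the equation of $Y_{66}$, and the coordinate changes carry $g$ to $h_{66}$, yielding $(X,\langle g\rangle)\cong(Y_{66},\langle h_{66}\rangle)$.

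The main obstacle is the structural input of the second step: extracting from the mere existence of a wild symplectic automorphism $\sigma$ of order $11$ a concrete $\sigma$-equivariant genus-one fibration, with control of its base action and singular fibres. Once that fibration is available the rest is a constrained finite determination, but along the way one must still verify that it is jacobian (admits a section) and relatively minimal, that the generic fibre genuinely has $j=0$, and that no other singular-fibre configuration is compatible with $e(X)=24$ and the free $11$-orbits on the base.
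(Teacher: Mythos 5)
Your overall architecture---split $g$ into the wild part $\sigma=g^6$ and the tame part $\tau=g^{11}$, extract an invariant genus-one fibration, then force the Weierstrass equation---is reasonable, and your endgame (invariance $f(t+1)=f(t)$, the degree count, normalisation to $t^{11}-t$) is essentially the computation in the paper. But the two structural pillars in the middle have genuine gaps. First, ``since $G$ is abelian, $g$ preserves $\pi$'' does not follow. Commutativity only says that $g$ carries $\pi$ to \emph{some} $\sigma$-invariant genus-one fibration, and such fibrations need not be unique: the $\sigma$-invariant part of ${\rm NS}(X)$ is a rank-$2$ hyperbolic lattice, hence contains exactly two primitive isotropic rays, and a priori $g$ could interchange them---this is perfectly consistent with $\sigma=g^6$ fixing both, since $6$ is even, and also with the existence of a $g$-invariant ample class (which can lie on the ray spanned by the sum of the two isotropic classes). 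You would need $\pi$ to be \emph{canonically} attached to $\sigma$, which you do not establish. The paper sidesteps exactly this point by building the fibration not from the wild part but from the non-symplectic involution $g^{33}$: by Lemma \ref{nsym2} one gets $X/\langle g^{33}\rangle\cong {\bf F}_4$, and since ${\bf F}_4$ has a \emph{unique} ruling, the induced elliptic fibration is automatically $\langle g\rangle$-invariant; the same lemma also hands over the $g$-invariant section $R\subset{\rm Fix}(g^{33})$, which you need later for the equivariant Weierstrass form but only promise (``after producing a section'').

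Second, your proof that $\tau=g^{11}$ is purely non-symplectic is broken. An order-$6$ automorphism of the generic fibre need not fix a point: the automorphism group of a genus-one curve over $k(t)$ contains translations, and translation by a $6$-torsion point of the Mordell--Weil group has order $6$, exists for any $j$-invariant, and acts \emph{trivially} on the invariant $1$-form---i.e.\ it is symplectic, which is precisely the case you must exclude. Ruling this out needs either a $\tau$-fixed section together with control of the singular fibres (e.g.\ prime-to-$p$ torsion injects into the group of the smooth locus of an additive fibre, which is ${\bf G}_a$ in characteristic $11$), neither of which is available at that stage of your argument, or else the theorem of \cite{DK3} that $11$ is the maximal finite order of a \emph{symplectic} automorphism in characteristic $11$---which is exactly how the paper obtains ${\rm ord}(g)=11.6$ in one line (Lemma \ref{11.6}), making your detour unnecessary. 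Note that this gap propagates: your conclusion $j\equiv 0$, hence $A=0$ in the Weierstrass form, rests on it, whereas the paper derives $A=0$ independently, from the fact that the zero set of $A$ has at most $8$ points yet must be invariant under the order-$11$ action on the base. So, as written, both the $g$-invariance of the fibration and the pure non-symplecticity of $g^{11}$ remain unproven, and the paper's own route (cohomological eigenvalue computation via the Mathieu property in Lemma \ref{g}, then Lemma \ref{66'}) is what fills these holes.
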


\begin{remark} In characteristic $p=2$, $3$, there is an example of a K3 surface with an automorphism of order 66, as was noticed by Matthias Sch\"utt:

\medskip
$p=2$
\begin{equation}\label{formula-2}
X: y^2-y=x^3+t^{11},
 \end{equation}
\begin{equation}\label{form2-2}
 g(x,y,t)=(\zeta_{33}^{11}x,\, y+1,\, \zeta_{33}^3t)
 \end{equation}
 where $\zeta_{33}\in k$ is a primitive 33rd  root of unity. The surface has only one singular fibre, type $II$ at $t=\infty$. All smooth fibres have $j$-invariant 0.

 \medskip
$p=3$
\begin{equation}\label{formula-3}
X: y^2=x^3-x+t^{11},
 \end{equation}
\begin{equation}\label{form2-3}
g(x,y, t)=(x+1,\, \zeta_{22}^{11}y,\, \zeta_{22}^2t)
 \end{equation}
 where $\zeta_{22}\in k$ is a primitive 22nd root of unity. The surface has only one singular fibre, type $II$ at $t=\infty$. All smooth fibres have $j$-invariant 0.
 
 In characteristic $2$ and 3, it seems that 66 is the maximum finite order  and is realized only by the above surface up to isomorphism.
\end{remark}

\bigskip

{\bf Notation}

\bigskip
For an automorphism $g$ of a K3 surface $X$, we use the following notation:
\begin{itemize}
\item $X^g={\rm Fix}(g)$ : the fixed locus of $g$
\medskip
\item $e(g):=e({\rm Fix}(g))$, the Euler characteristic of ${\rm Fix}(g)$;
\medskip
\item $\Tr(g^*|H^*(X)):=\sum_{j=0}^{2\dim X} (-1)^j\Tr (g^*|H^j_{\rm et}(X,{\bbQ}_l))$.
\medskip
\item $[g^*]=[\lambda_1, \ldots, \lambda_{22}]$ : the eigenvalues of $g^*|H^2_{\rm et}(X,{\bbQ}_l)$
\medskip
\item $\zeta_a$ : a primitive $a$-th  root of unity in $\overline{\bbQ_l}$
\medskip
\item $\zeta_a:\phi(a)$ : all primitive $a$-th  roots of unity in $\overline{\bbQ_l}$ where $\phi$ is the Euler function and $\phi(a)$ the number of conjugates of $\zeta_a$
\medskip
\item $[\lambda.r]\subset [g^*]$ : $\lambda$ repeats $r$ times in
$[g^*]$.
\medskip
\item $[(\zeta_a:\phi(a)).r]\subset [g^*]$ : the list $\zeta_a:\phi(a)$ repeats $r$ times in
$[g^*]$.
\end{itemize}

\section{Preliminaries}

The following basic
results can be found in the previous paper \cite{K}.

\begin{proposition}\label{integral}$($Proposition 2.1 \cite{K}$)$ Let $g$ be an automorphism of a projective variety $X$ over an algebraically closed field $k$ of
characteristic $p> 0$. Let $l$ be a prime $\neq p$. Then the following hold true.
\begin{enumerate}
\item $($3.7.3 \cite{Illusie}$)$ The characteristic polynomial of
$g^*|H_{\rm et}^j(X,\bbQ_l)$ has integer coefficients for each
$j$. The characteristic polynomial does not depend on the choice of  cohomology, $l$-adic or crystalline. In particular, if a primitive
$m$-th root of unity appears with multiplicity $r$ as an
eigenvalue of $g^*|H_{\rm et}^j(X,\bbQ_l)$, then so does each of
its conjugates.
\item  If $g$ is of finite order, then $g$ has an
invariant ample divisor, and
$g^*|H_{\rm et}^2(X,\bbQ_l)$ has $1$ as an eigenvalue.
\item  If $X$ is a K3 surface, $g$ is tame and $g^*|H^0(X,\Omega_X^2)$ has
$\zeta_n\in k$ as an eigenvalue, then $g^*|H_{\rm et}^2(X,\bbQ_l)$
has $\zeta_n\in \overline{\bbQ_l}$ as an eigenvalue.
\end{enumerate}
\end{proposition}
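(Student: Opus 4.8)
The three parts have quite different flavors, so I would treat them separately. For part (1), the integrality of the characteristic polynomial and its independence of the cohomology theory are precisely the content of the comparison results cited from \cite{Illusie} (3.7.3), so I would simply invoke these: the characteristic polynomial of $g^*$ on $H^j_{\rm et}(X,\bbQ_l)$ lies in $\bbZ[x]$ and coincides with the one computed crystalline-theoretically. The only thing then left to establish is the statement on conjugate eigenvalues. Here I would argue that since $P(x)=\prod_i(x-\lambda_i)$ has rational (indeed integer) coefficients, these coefficients are fixed by $\Gal(\overline{\bbQ_l}/\bbQ)$; hence the multiset of roots is stable under this Galois group, which acts transitively on the $\phi(m)$ primitive $m$-th roots of unity. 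Equality of multiplicities of all conjugates is then immediate — equivalently, $\Phi_m(x)^r$ divides $P(x)$ whenever $\zeta_m$ occurs with multiplicity $r$, where $\Phi_m$ is the $m$-th cyclotomic polynomial.

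For part (2), I would produce the invariant ample class by averaging over the finite group: starting from any ample divisor $H$, the class $\sum_{i=0}^{{\rm ord}(g)-1}(g^i)^*H$ is $g$-invariant and remains ample, since a positive sum of ample classes is ample. Pushing this class into $H^2_{\rm et}(X,\bbQ_l)$ via the cycle class map produces a vector fixed by $g^*$, and it is nonzero because its self-intersection (the image of $H\cdot H$ under the pairing) is positive. Therefore $1$ is an eigenvalue of $g^*|H^2_{\rm et}(X,\bbQ_l)$.

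Part (3) is where the real work lies, and I expect it to be the main obstacle, since in positive characteristic one cannot appeal directly to the Hodge decomposition of $H^2(X,\bbC)$. The plan is to pass through de Rham and crystalline cohomology. For a K3 surface the Hodge--de Rham spectral sequence degenerates, so $H^0(X,\Omega^2_X)$ is a $g^*$-stable subquotient of $H^2_{\DR}(X/k)$; hence the eigenvalue $\zeta_n\in k$ on the regular $2$-form is an eigenvalue of $g^*$ on $H^2_{\DR}(X/k)$. By part (1), the characteristic polynomial of $g^*$ on $H^2_{\crys}$ (equivalently on $H^2_{\rm et}$) has integer coefficients and reduces modulo the maximal ideal of the Witt ring to that of $g^*$ on $H^2_{\DR}(X/k)$. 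Because $g$ is tame, all these eigenvalues are roots of unity of order prime to $p$, and reduction modulo $p$ restricts to a bijection between such roots of unity in $\overline{\bbQ_l}$ and in $k$, compatible with the Teichm\"uller identifications. Tracking $\zeta_n$ through this bijection shows that $\zeta_n\in\overline{\bbQ_l}$ must itself occur as an eigenvalue of $g^*|H^2_{\rm et}(X,\bbQ_l)$. The delicate point throughout is to keep the various identifications of $n$-th roots of unity (in $k$, in $\overline{\bbQ_l}$, and in the Witt ring) mutually consistent, so that the \emph{specified} eigenvalue $\zeta_n$ — and not merely some Galois conjugate of it — is shown to appear.
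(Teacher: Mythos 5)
Your three arguments are all correct, and there is nothing here to compare against in detail: the paper does not prove this proposition, but imports it verbatim from \cite{K} (Proposition 2.1), with part (1) attributed to \cite{Illusie}. Your reconstructions --- Galois stability of the roots of the integral characteristic polynomial, averaging an ample class over $\langle g\rangle$, and the de Rham/crystalline comparison with the bijection between prime-to-$p$ roots of unity in $k$ and in characteristic zero --- are exactly the standard arguments behind those citations (and note that your ``delicate point'' about recovering the specific $\zeta_n$ rather than a conjugate is already disposed of by part (1), which makes all conjugates appear with equal multiplicity).
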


\begin{proposition}\label{trace}$($Topological Lefschetz formula, cf. \cite{DL} Theorem 3.2$)$   Let $X$ be a smooth projective variety over  an algebraically closed field $k$ of
characteristic $p> 0$ and let $g$ be a tame automorphism of $X$. Then $X^g={\rm Fix}(g)$ is smooth and
 $$e(g):=e(X^g)=\Tr(g^*|
H^*(X)).$$
\end{proposition}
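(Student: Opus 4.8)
The plan is to prove the two assertions separately, each time exploiting that $g$ has order $n$ prime to $p=\mathrm{char}\,k$.

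For the smoothness of $X^g$ I would argue formally at a fixed point $x$. The cyclic group $G=\langle g\rangle$ has order prime to $p$, hence is linearly reductive, and its linear action on the cotangent space $\frakm_x/\frakm_x^2$ is diagonalizable over $k$ (the relevant roots of unity lie in $k$ since $p\nmid n$). The crucial step is to \emph{linearize}: averaging over $G$ --- legitimate because $n$ is invertible in $k$ --- produces formal coordinates $t_1,\dots,t_d$ at $x$ with $g^*t_i=\lambda_i t_i$. Then the completion of $X^g$ at $x$ is cut out exactly by the $t_i$ with $\lambda_i\neq 1$, so $X^g$ is formally smooth at $x$ of dimension equal to the multiplicity of the eigenvalue $1$. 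It is precisely here that tameness is indispensable: for a wild $g$ no averaging operator exists and the fixed locus may be singular or non-reduced.

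For the trace identity I would pass to cohomology with compact support $H^*_c$; since $X$ is projective this coincides with ordinary cohomology and the two Euler characteristics agree, so it suffices to show $\Tr(g^*|H^*_c(X))=e(X^g)$. The $g$-stable decomposition $X=X^g\sqcup U$ with $U:=X\setminus X^g$ gives an equivariant long exact sequence, and additivity of the trace along it yields $\Tr(g^*|H^*_c(X))=\Tr(g^*|H^*_c(X^g))+\Tr(g^*|H^*_c(U))$. Since $g$ acts trivially on $X^g$ the first term is $e(X^g)$, so everything reduces to the vanishing $\Tr(g^*|H^*_c(U))=0$; that is, a tame automorphism without fixed points must contribute trace zero.

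To prove this vanishing --- the \emph{heart of the matter} and, I expect, the main obstacle --- I would stratify $U$ by stabilizer subgroup: for each proper $H\subsetneq G$ let $U_H$ be the $G$-stable locally closed locus of points whose stabilizer is exactly $H$, so $U=\bigsqcup_{H\subsetneq G}U_H$ and $\Tr(g^*|H^*_c(U))=\sum_H\Tr(g^*|H^*_c(U_H))$. On $U_H$ the subgroup $H$ acts trivially while $\Gamma:=G/H$ acts freely with $\bar g\neq 1$ a generator, and $U_H$ is smooth because $X^H$ is the fixed locus of a generator of $H$ and hence smooth by the first part; thus $\pi\colon U_H\to V_H:=U_H/\Gamma$ is a finite \'etale (so tame, as $|\Gamma|\mid n$) Galois cover of smooth varieties. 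Decomposing $\pi_*\bbQ_l=\bigoplus_{\chi\in\widehat\Gamma}\calL_\chi$ into characters, $\bar g$ acts on $H^*_c(V_H,\calL_\chi)$ by the scalar $\chi(\bar g)$, so $\Tr(\bar g|H^*_c(U_H))=\sum_\chi\chi(\bar g)\,e(V_H,\calL_\chi)$. Because $\pi$ is tame, the Grothendieck--Ogg--Shafarevich formula (and its higher-dimensional analogue) gives $e(V_H,\calL_\chi)=e(V_H)$ independent of $\chi$, whence the sum equals $e(V_H)\sum_{\chi\in\widehat\Gamma}\chi(\bar g)=0$ for $\bar g\neq 1$. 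This identity $e(V_H,\calL_\chi)=e(V_H)$ is the second and decisive place where $p\nmid n$ enters --- for a wildly ramified sheaf the Swan conductor would break the cancellation $\sum_\chi\chi(\bar g)=0$ --- and rather than reprove the tame Euler--Poincar\'e formula I would simply cite it. The remaining ingredients ($H^*_c=H^*$ for $X$ projective, $R\pi_!=\pi_*$ for finite $\pi$, and additivity on equivariant long exact sequences) are formal, and the whole argument is in substance that of Deligne--Lusztig, Theorem 3.2.
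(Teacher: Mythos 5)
The paper offers no proof of this proposition: it is stated purely as a quotation of Deligne--Lusztig, Theorem 3.2 (with the standard smoothness assertion folded in), so the only thing to compare your argument against is that source. Your proposal is in substance a correct reconstruction of exactly that argument --- formal linearization by averaging for smoothness of $X^g$, additivity of compactly supported traces over the stratification of $X\setminus X^g$ by stabilizers, decomposition of the free tame Galois covers $U_H\to V_H$ into character sheaves, and the cancellation $\sum_{\chi}\chi(\bar g)=0$ --- as you yourself acknowledge at the end. The one caveat worth flagging: the phrase ``Grothendieck--Ogg--Shafarevich and its higher-dimensional analogue'' is not an off-the-shelf citable theorem; the input you actually need is that a lisse rank-one sheaf trivialized by a finite \'etale Galois cover of degree prime to $p$ satisfies $e_c(V_H,\calL_\chi)=e_c(V_H)$ (equivalently, that the equivariant compactly supported Euler characteristic of a free tame action is a multiple of the regular representation), and this is precisely what Deligne--Lusztig and Illusie's Brauer-theoretic account of Deligne's argument establish, by d\'evissage to the curve case where it really is Grothendieck--Ogg--Shafarevich.
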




\begin{lemma}\label{nsym2}$($Lemma 1.6 \cite{K60}$)$   
Let $X$ be a K3 surface in characteristic $p\neq 2$, admitting an automorphism $h$ of order $2$ with $\dim
H^2_{\rm et}(X,{\bbQ}_l)^h =2$. Then $h$ is non-symplectic and has
an $h$-invariant elliptic fibration $\psi:X\to {\bf P}^1$,  $$X/\langle h\rangle\cong {\bf F}_e$$ a rational ruled surface, and $X^h$ is either a curve of genus
$9$ which is a $4$-section of $\psi$ or the union of a section and a curve of genus $10$ which
is a $3$-section.
In the first case $e=0, 1$ or $2$, and in the second $e=4$. Each singular fibre of $\psi$
is of type $I_1$ $($nodal$)$, $I_2$, $II$ $($cuspidal$)$ or $III$, and is intersected by $X^h$ at the node and two smooth points if  of type $I_1$, at the two singular points if of type $I_2$, at the cusp with multiplicity $3$ and a smooth point if of type $II$, at the singular point tangentially to both components if of type $III$. If $X^h$ contains a section, then each singular fibre
is of type $I_1$ or $II$.
\end{lemma}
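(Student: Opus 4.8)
The plan is to realize $X$ as a double cover of a rational ruled surface and to read off every piece of the stated data from the geometry of the branch curve. First I would record that, since $p\neq 2$, the involution $h$ is tame, so by Proposition \ref{trace} the fixed locus $X^h$ is smooth and $e(X^h)=\Tr(h^*|H^*(X))$. The eigenvalues of $h^*$ on $H^2_{\rm et}(X,\bbQ_l)$ are $\pm1$, with the $+1$-eigenspace of dimension $2$ by hypothesis and hence the $-1$-eigenspace of dimension $20$; combined with the trivial action on $H^0,H^4$ and the vanishing of $H^1,H^3$ this gives $\Tr(h^*|H^*(X))=1-18+1=-16$, so $e(X^h)=-16$. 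A symplectic involution of a K3 surface has eight isolated fixed points and hence $e(X^h)=8>0$; since $-16<0$, $h$ is non-symplectic, so $h^*\omega_X=-\omega_X$, the local action at each fixed point is $\mathrm{diag}(1,-1)$, and $X^h$ is a disjoint union of smooth curves $C_i$ with $\sum_i(2-2g(C_i))=-16$.

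Next I would pass to the quotient $\pi\colon X\to Y:=X/\langle h\rangle$. Because $X^h$ is a smooth divisor, $\pi$ is a double cover branched precisely along $B:=\pi(X^h)$, the surface $Y$ is smooth, and $\pi$ restricts to an isomorphism $X^h\cong B$. Since $h^*\omega_X=-\omega_X$ the $2$-form does not descend, so $Y$ has $q=p_g=0$; writing $B\in|2L|$ for the line bundle $L$ defining the cover, the relation $0=K_X=\pi^*(K_Y+L)$ forces $L=-K_Y$ and $B\in|-2K_Y|$, whence $K_Y$ is not pseudoeffective and $Y$ is a smooth rational surface. Finally $H^2_{\rm et}(Y,\bbQ_l)\cong H^2_{\rm et}(X,\bbQ_l)^h$ has dimension $2$, so $b_2(Y)=2$ and therefore $Y\cong{\bf F}_e$ for some $e\ge 0$.

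With $C_0$ the negative section ($C_0^2=-e$) and $f$ the ruling class one has $K_Y=-2C_0-(e+2)f$, hence $B\in|4C_0+2(e+2)f|$ and $B\cdot f=4$. Composing $\pi$ with the ruling ${\bf F}_e\to{\bf P}^1$ produces an $h$-invariant genus-one fibration $\psi\colon X\to{\bf P}^1$ whose fibre over $t$ is the double cover of the ruling fibre ${\bf P}^1$ branched at the four points $B\cap f$. The smoothness and reducedness of $B\cong X^h$ then pin down $e$: since $(4C_0+2(e+2)f)\cdot C_0=4-2e$, the negative section is split off exactly once precisely when $e=4$, giving $B=C_0\sqcup B'$ with $B'\in|3C_0+12f|$ and $C_0\cdot B'=0$; for $e=3$ and $e\ge 5$ the section $C_0$ is forced into $B$ yet meets the residual curve (or appears with multiplicity $\ge 2$), contradicting smoothness, while for $e\le 2$ the only negative curve $C_0$ satisfies $C_0\cdot B\ge 0$ so $B$ is irreducible. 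Adjunction then yields, in the irreducible case ($e=0,1,2$), a genus-$9$ curve that is a $4$-section, and in the split case ($e=4$) a genus-$0$ section $C_0$ (as $C_0\cdot f=1$) together with a genus-$10$ curve $B'$ that is a $3$-section; both recover $e(X^h)=-16$.

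It remains to classify the singular fibres, which I expect to be the main technical step. Each fibre of $\psi$ is a double cover $w^2=s$ of a line, where $s$ is the restriction to the ruling fibre of a local equation of the smooth curve $B$, so the fibre type is governed entirely by the partition of $B\cdot f=4$ into local intersection multiplicities. Carrying out this local computation, a simple tangency with multiplicities $(2,1,1)$ produces a nodal cubic $I_1$ meeting $X^h$ at the node and two smooth points; $(2,2)$ produces an $I_2$ meeting $X^h$ at its two nodes; $(3,1)$ produces a cuspidal $II$ meeting $X^h$ at the cusp with multiplicity $3$ and at one smooth point; and $(4)$ produces two smooth rational curves tangent at a point, i.e.\ type $III$, meeting $X^h$ tangentially at that singular point. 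Finally, a section of $\psi$ meets every fibre in a single reduced smooth point; the fibres $I_2$ and $III$ concentrate all four intersection points with $X^h$ at their singular points and so leave no such point, whereas $I_1$ and $II$ each retain a transverse smooth intersection point. Hence when $X^h$ contains a section (the case $e=4$), every singular fibre is of type $I_1$ or $II$, completing the argument.
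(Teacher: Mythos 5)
This paper never actually proves Lemma \ref{nsym2}: it is imported verbatim from \cite{K60} (Lemma 1.6 there), so there is no internal proof to compare against, and the comparison is necessarily with the standard argument --- which is what your proposal carries out, and which is transparently the route of the cited source (the statement itself, with $X/\langle h\rangle\cong {\bf F}_e$ and the branch behaviour of the fibres, reflects that double-cover analysis). Your steps are the right ones: the Lefschetz formula gives $e(X^h)=1+(2-20)+1=-16$, hence $h$ is non-symplectic and $X^h$ is a disjoint union of smooth curves; the quotient $Y=X/\langle h\rangle$ is a smooth rational surface with $b_2=2$, hence ${\bf F}_e$; the branch curve $B\cong X^h$ lies in $|-2K_Y|$, so $B\cdot f=4$ and $B\cdot C_0=4-2e$; the exclusion of $e=3$ and $e\ge 5$, the adjunction computations of the genera, the dictionary between partitions of $4$ (as local intersection multiplicities of the smooth curve $B$ with a ruling fibre) and the fibre types $I_1$, $I_2$, $II$, $III$, and the final observation that a section inside the branch locus forces every partition to contain a part equal to $1$ (hence excludes $(2,2)$ and $(4)$) are all correct.

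There is, however, one genuine gap, plus two smaller loose ends. The gap: your justification of irreducibility of $B$ for $e\le 2$ (``the only negative curve $C_0$ satisfies $C_0\cdot B\ge 0$ so $B$ is irreducible'') is a non sequitur --- a smooth member of $|-2K_Y|$ could a priori be a \emph{disjoint union} of curves of non-negative self-intersection, and the same issue arises for the connectedness of the residual curve $B'\in|3C_0+12f|$ when $e=4$. The repair uses $b_2(Y)=2$. First, $C_0\not\subset B$ for $e\le 2$: otherwise smoothness of $B$ forces $C_0\cdot(B-C_0)=0$, whereas $C_0\cdot(B-C_0)=4-e>0$; hence every component of $B$ has non-negative square. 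If now $B=B_1\sqcup B_2$ nontrivially, then $B_1\cdot B_2=0$ with $B_1^2,B_2^2\ge 0$, and the Hodge index theorem on the rank-two lattice $\mathrm{Num}(Y)$ of signature $(1,1)$ forces $B_1^2=B_2^2=0$ with $B_1,B_2$ proportional, so both are supported on fibres of a ruling; but any fibre component $f_0\subset B$ satisfies $f_0\cdot B=4>0$ (for either ruling of ${\bf F}_0$), contradicting the pairwise disjointness of the components of the smooth curve $B$. The identical argument gives connectedness of $B'$ when $e=4$. The smaller points: (i) since $p=3$ is allowed, you must also check that $\psi$ is elliptic rather than quasi-elliptic, i.e.\ that the generic fibre is smooth; this holds because $B\to{\bf P}^1$ cannot be generically inseparable (in the irreducible case the inseparable degree is a power of $3$ dividing $4$, and in the split case a purely inseparable triple cover of ${\bf P}^1$ would make $B'$ rational, contradicting $g(B')=10$); (ii) your appeal to ``eight isolated fixed points'' of a symplectic involution is a nontrivial theorem in positive characteristic --- all you need is the elementary fact that a tame symplectic involution has isolated fixed points (local action $\mathrm{diag}(-1,-1)$), hence $e(X^h)\ge 0$, contradicting $-16$.
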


\begin{remark} If $e\neq 0$, the $h$-invariant elliptic fibration $\psi$ is the pull-back of the unique ruling of ${\bf F}_e$. If $e=0$, either ruling of ${\bf F}_0$ lifts to an $h$-invariant elliptic fibration.
\end{remark}

\begin{lemma}\label{fix}$($Lemma 2.10 \cite{K}$)$ Let $S$ be a set and ${\rm Aut}(S)$ be the group of bijections of $S$. For any $g\in {\rm Aut}(S)$ and positive integers $a$ and $b$,
\begin{enumerate}
\item ${\rm Fix}(g)\subset {\rm Fix}(g^a)$;
\item ${\rm Fix}(g^a)\cap {\rm Fix}(g^b)={\rm Fix}(g^d)$ where $d=\gcd (a, b)$;
\item ${\rm Fix}(g)= {\rm Fix}(g^a)$ if ${\rm ord}(g)$ is finite and  prime to $a$.
\end{enumerate}
\end{lemma}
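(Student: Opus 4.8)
The plan is to prove the three statements in order, each building on the previous one. Throughout, I write $\text{Fix}(g^a)=\{s\in S : g^a(s)=s\}$, and I will repeatedly use that $g$, being a bijection, has a well-defined inverse, so that $g^k$ makes sense for every integer $k\in\bbZ$, not only for positive $k$. This invertibility is the one feature of $\text{Aut}(S)$ (as opposed to an arbitrary self-map of $S$) that the argument genuinely exploits.

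For part (1) I would argue by a one-line induction: if $g(s)=s$ then $g^a(s)=g^{a-1}(g(s))=g^{a-1}(s)$, and the claim follows. The same reasoning yields the slightly more flexible fact that whenever $g^c(s)=s$ one has $g^{kc}(s)=s$ for \emph{every} $k\in\bbZ$ (the inverse of $g$ handles the negative values of $k$); this strengthened observation is exactly what makes part (2) go through.

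Part (2) is the substantive step. The inclusion $\text{Fix}(g^d)\subset \text{Fix}(g^a)\cap\text{Fix}(g^b)$ is immediate from part (1) applied to the bijection $g^d$, since $d\mid a$ and $d\mid b$ give $g^a=(g^d)^{a/d}$ and $g^b=(g^d)^{b/d}$. For the reverse inclusion I would invoke B\'ezout's identity: write $d=\gcd(a,b)=xa+yb$ with $x,y\in\bbZ$. If $s\in\text{Fix}(g^a)\cap\text{Fix}(g^b)$, then $g^d(s)=g^{xa+yb}(s)=(g^a)^x\bigl((g^b)^y(s)\bigr)=s$, where both applications use the observation from part (1) that a point fixed by $g^a$ (resp. $g^b$) is fixed by every integer power of $g^a$ (resp. $g^b$). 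This is the single place where the hypothesis that $g$ is a bijection is indispensable, because $x$ or $y$ may be negative; it is the closest thing to an obstacle, and it is dispatched precisely by working in $\text{Aut}(S)$.

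Finally, part (3) follows formally from part (2): if $n=\text{ord}(g)$ is finite and coprime to $a$, then $g^n=\text{id}$ gives $\text{Fix}(g^n)=S$, so part (2) with $b=n$ yields $\text{Fix}(g^a)=\text{Fix}(g^a)\cap S=\text{Fix}(g^a)\cap\text{Fix}(g^n)=\text{Fix}(g^{\gcd(a,n)})=\text{Fix}(g)$, the last equality because $\gcd(a,n)=1$. I do not anticipate any real difficulty beyond the careful bookkeeping of negative exponents noted above.
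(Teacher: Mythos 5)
Your proof is correct. The paper itself gives no argument for this lemma --- it is quoted verbatim from \cite{K} (Lemma 2.10 there) and used as a black box --- and your B\'ezout-based argument (the stabilizer set $\{k\in\bbZ : g^k(s)=s\}$ being closed under integer combinations, so that $d=xa+yb$ forces $g^d(s)=s$) is exactly the standard proof of this fact, with the one genuinely necessary ingredient, invertibility of $g$ to handle negative exponents, correctly identified and used.
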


\begin{lemma}\label{sum}$($Lemma 2.11 \cite{K}$)$ Let $R(n)$ be the sum of all primitive $n$-th root of unity in $\overline{\bbQ}$ or in
$\overline{\bbQ_l}$ where $(l,n)=1$. Then
$$R(n)=\left\{\begin{array}{ccl} 0&{\rm if}& n\,{\rm has\,\, a\,\, square\,\, factor},\\
(-1)^t&{\rm if}& n\,{\rm is\,\, a\,\, product\,\, of}\,\,t\,\,{\rm distinct\,\, primes}.\\
\end{array} \right.$$
\end{lemma}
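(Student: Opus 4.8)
The function $R$ is exactly the M\"obius function $\mu$, so the plan is to establish the stated values by the standard two-step reduction: first show that $R$ is multiplicative on coprime arguments, then evaluate it on prime powers. Throughout we work in a field of characteristic $0$ (either $\overline{\bbQ}$ or $\overline{\bbQ_l}$), so for each $n$ there are exactly $n$ distinct $n$-th roots of unity, and their total sum vanishes once $n>1$, being minus the coefficient of $x^{n-1}$ in $x^n-1$.

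First I would prove multiplicativity: if $\gcd(m,n)=1$ then $R(mn)=R(m)R(n)$. By the Chinese Remainder Theorem, the map $(\zeta,\eta)\mapsto \zeta\eta$ carries each pair consisting of a primitive $m$-th root $\zeta$ and a primitive $n$-th root $\eta$ to a primitive $mn$-th root, and this is a bijection onto the set of primitive $mn$-th roots. Summing over all such pairs and using distributivity gives
$$R(mn)=\sum_{\zeta,\eta}\zeta\eta=\Big(\sum_{\zeta}\zeta\Big)\Big(\sum_{\eta}\eta\Big)=R(m)R(n).$$

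Next I would compute $R$ on prime powers. Clearly $R(1)=1$. For a prime $p$, the primitive $p$-th roots are all the $p$-th roots except $1$, so $R(p)=0-1=-1$. For $e\ge 2$ the primitive $p^e$-th roots are precisely those $p^e$-th roots of unity that are not $p^{e-1}$-th roots; hence $R(p^e)$ equals the sum of all $p^e$-th roots minus the sum of all $p^{e-1}$-th roots, which is $0-0=0$. Writing $n=p_1^{e_1}\cdots p_t^{e_t}$ and combining with multiplicativity, $R(n)=\prod_i R(p_i^{e_i})$ vanishes as soon as some $e_i\ge 2$ (that is, when $n$ has a square factor), and otherwise equals $\prod_i(-1)=(-1)^t$, exactly as claimed.

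There is no serious obstacle here; the only point requiring care is the bijection in the multiplicativity step, which rests on the coprimality hypothesis through CRT. Alternatively, one can argue in a single stroke: since every $n$-th root of unity is a primitive $d$-th root for a unique divisor $d\mid n$, partitioning the $n$-th roots accordingly yields $\sum_{d\mid n}R(d)=[n=1]$, the same recursion that characterizes $\mu$; an immediate induction then gives $R=\mu$ together with the stated formula.
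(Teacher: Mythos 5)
Your proof is correct: both the multiplicativity-plus-prime-power computation and the alternative M\"obius-recursion argument $\sum_{d\mid n}R(d)=[n=1]$ are sound, and the characteristic-zero hypothesis is invoked exactly where needed (so that $x^n-1$ has $n$ distinct roots whose sum vanishes for $n>1$). Note that the present paper gives no proof of this lemma at all --- it is quoted verbatim as Lemma 2.11 of \cite{K} --- so there is nothing to compare against here; your argument is the standard classical proof that $R(n)$ equals the M\"obius function $\mu(n)$, which is what any source would supply.
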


\section{Invariant elliptic fibration}

The following two lemmas will play a key role in our proof.

\begin{lemma}\label{66} Let $g$ be an automorphism of order $66$ of a K3
surface $X$ in characteristic $p\neq 2$, $3$, $11$. If the eigenvalues of $g^*$ on the second cohomology is given by $$[g^{*}]=[1,\,\zeta_{66}:20,\, 1],$$ then
\begin{enumerate}
\item there is a $g^{}$-invariant elliptic
fibration $\psi:X\to {\bf P}^1$ with $12$ cuspidal fibres, say $F_0$, $F_{t_1}, \ldots, F_{t_{11}}$;
\item  ${\rm Fix}(g^{33})$
consists of a section $R$ of $\psi$ and a curve $C_{10}$ of genus
$10$ which is a $3$-section passing through each cusp with multiplicity $3$;
\item the action of $g$ on the base ${\bf P}^1$ is of order $11$, fixes $2$ points, say $\infty$ and $0$, and makes the $11$ points $t_1, \ldots, t_{11}$ form a single orbit, where $F_{\infty}$ is a smooth fibre;
\item $${\rm Fix}(g^{11})=R\cup\,\{{\rm the\,
cusps\, of\, the\, 12\, cuspidal\, fibres}\};$$
\item ${\rm Fix}(g)$  consists of the  
$3$ points,  $$R\cap F_{\infty},\,\,R\cap F_{0},\,\,C_{10}\cap F_{0}.$$
\end{enumerate}
\end{lemma}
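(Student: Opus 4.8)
The plan is to pivot everything off the central involution $g^{33}$ together with a short list of Lefschetz numbers. First I would observe, via Proposition \ref{integral}(1), that the eigenvalue list $[(g^{k})^*]$ is obtained by raising $[g^*]=[1,\zeta_{66}\!:\!20,1]$ to the $k$-th power; combined with the topological Lefschetz formula (Proposition \ref{trace}) and Lemma \ref{sum} this records $e(g^{33})=-16$, $e(g^{22})=-6$, $e(g^{11})=14$, $e(g^{6})=2$, $e(g^{3})=6$, $e(g)=3$. In particular $(g^{33})^*$ has eigenvalues $1,1$ and $(-1)$ with multiplicity $20$, so $\dim H^2_{\rm et}(X,\bbQ_l)^{g^{33}}=2$ and $g^{33}$ is precisely the involution to which Lemma \ref{nsym2} applies, producing a $g^{33}$-invariant elliptic fibration $\psi$ with $X/\langle g^{33}\rangle\cong{\bf F}_e$ and the two alternatives for ${\rm Fix}(g^{33})$.

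The first genuine step is to exclude the genus-$9$ alternative. Since $g^{33}$ is central in $\langle g\rangle$, $g$ preserves ${\rm Fix}(g^{33})$, so if it were a single genus-$9$ curve $C_9$ then $g^{6}$ (of order $11$) would act on $C_9$. If $g^{6}|_{C_9}$ were trivial then $C_9\subset{\rm Fix}(g^{6})$, whence ${\rm Fix}(g^{3})={\rm Fix}(g^{6})\cap{\rm Fix}(g^{33})=C_9$ by Lemma \ref{fix}(2), forcing $e(g^3)=-16\neq 6$; if $g^{6}|_{C_9}$ had order $11$, Riemann--Hurwitz would read $19=11g'+5f$, which has no solution in nonnegative integers. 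Hence we are in the second alternative: ${\rm Fix}(g^{33})=R\cup C_{10}$ with $R$ a section and $C_{10}$ a genus-$10$ $3$-section, $e=4$, and every singular fibre of type $I_1$ or $II$. Because $e=4\neq 0$ the ruling of ${\bf F}_4$ is unique, so the order-$33$ automorphism induced by $g$ preserves it and $\psi$ is $g$-invariant. This already yields (2) once the cuspidal fibres are identified, since at a type-$II$ fibre Lemma \ref{nsym2} forces $C_{10}$ to pass through the cusp with multiplicity $3$.

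The crux is pinning down the order $d$ of the action of $g$ on the base ${\bf P}^1$, a priori a divisor of $33$. Here the sign of a Lefschetz number does the work: since $e(g^{22})=-6<0$ and the fixed locus of the tame order-$3$ automorphism $g^{22}$ is a disjoint union of smooth points and smooth curves (Proposition \ref{trace}), some fixed curve must have genus $\ge 2$. A component of a fibre of $\psi$ has genus $\le 1$, so that curve is a multisection dominating the base; being fixed pointwise it forces $g^{22}$ to act trivially on the base, i.e. $d\mid 22$. As $d\mid 33$ as well, $d\in\{1,11\}$; and $d=1$ is impossible, for then $g$ would fix the base and the section $R$ pointwise and hence act on $\omega_X$ through the fibrewise automorphism group of order dividing $6$, contradicting that $g$ is purely non-symplectic of order $66$. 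Hence $d=11$. I expect this dichotomy, $d=11$ versus $d=33$, to be the main obstacle, since both survive every crude Euler-number count; it is the negativity of $e(g^{22})$ that separates them.

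Finally I would harvest the remaining statements. Because $d=11$, the order-$6$ automorphism $g^{11}$ fixes $R$ pointwise, hence fixes the origin of every fibre and acts fibrewise with order exactly $6$ (it scales $\omega_X$ by $\zeta_6$); thus every smooth fibre has $j=0$, and among the permitted types $I_1,I_2,II,III$ only $II$ has $j=0$. So all singular fibres are cuspidal, and $e(X)=24$ forces exactly $12$ of them, proving (1); the order-$11$ action on the base then splits their locations as $12=1+11$, giving two fixed base points with $F_\infty$ smooth and the single orbit $t_1,\dots,t_{11}$, which is (3). For (4), $R\subset{\rm Fix}(g^{11})$ together with $e(g^{11})=14$ leaves $12$ fixed points on $C_{10}$, necessarily the $12$ cusps. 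For (5), $g|_R$ of order $11$ fixes $R\cap F_\infty$ and $R\cap F_0$, while $e(g)=3$ leaves exactly one fixed point on $C_{10}$; since the order-$6$ automorphism of the $j=0$ curve $F_\infty$ fixes only the origin $R\cap F_\infty$, no point of $C_{10}\cap F_\infty$ is fixed, so the remaining fixed point is the cusp $C_{10}\cap F_0$.
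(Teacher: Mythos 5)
Your proposal is correct, but its core runs on a different engine than the paper's. The paper never isolates ``the order $d$ of $g$ on the base'' as a dichotomy to be resolved; instead it does casework on singular-fibre types and orbit lengths: types $I_2$ and $III$ are excluded because a short orbit of reducible fibres would produce three linearly independent invariant classes for $g^{6*}$ or $g^{22*}$, contradicting $[g^*]$; a length-$11$ orbit of singular fibres must exist because otherwise $g^3$ would fix $R$ pointwise and induce an order-$22$ automorphism on a general fibre; and the remaining $I_1$ configurations are eliminated by the count $e(g^{11})=14$. You instead settle the base action first, from the single inequality $e(g^{22})=-6<0$: a tame automorphism whose fixed locus has negative Euler number fixes pointwise a curve of genus $\ge 2$, such a curve cannot lie in a fibre, and a pointwise-fixed horizontal curve forces $g^{22}$ to act trivially on ${\bf P}^1$, whence $d\mid\gcd(22,33)=11$ and then $d=11$. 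After that, the fibrewise order-$6$ action of $g^{11}$ gives $j\equiv 0$, so among the types permitted by Lemma \ref{nsym2} only $II$ survives, $e(X)=24$ gives twelve cuspidal fibres, and the orbit count $12=1+11$ yields (1) and (3) with no further casework; your exclusion of the genus-$9$ alternative (an order-$11$ automorphism cannot act nontrivially on a genus-$9$ curve, plus Lemma \ref{fix} and $e(g^3)=6$) likewise differs from the paper's (the order-$3$ action of $g^{11}$ would have $14>11$ fixed points), and both are valid. What your route buys is conceptual economy: the sign of one Lefschetz number replaces the fibre-type/orbit analysis and the appeal to independence of algebraic classes in cohomology. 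Two small repairs: the parenthetical that $g^{11}$ ``scales $\omega_X$ by $\zeta_6$'' is not part of the hypothesis, which constrains only $H^2$; deduce the exact fibrewise order $6$ by rigidity instead (if $g^{11}$ had order $m<6$ on the generic fibre, then $g^{11m}$ would fix a dense subset of $X$, hence be the identity, contradicting ${\rm ord}(g)=66$). Similarly, in excluding $d=1$, the fibrewise automorphism group can have order $4$ (when $j=1728$), not only a divisor of $6$; but in all cases $g^{12}={\rm id}$ follows, and the contradiction with ${\rm ord}(g)=66$ stands without invoking pure non-symplecticity, which is not among the hypotheses of the lemma.
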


\begin{proof}
Note that
$[g^{33*}]=[1,\,-1.20,\, 1]$. 
Thus, we can apply Lemma \ref{nsym2} to $h=g^{33}$. We compute $e(g)=3$ and
$$[g^{11*}]=[1,\,(\zeta_{6}:2).10,\, 1],\,\,\,e(g^{11})=14.$$ 
Note that $${\rm Fix}(g^{d})\subset {\rm Fix}(g^{33})$$ for any $d$ dividing 33.
If ${\rm Fix}(g^{33})$ is a curve $C_{9}$ of genus $9$, then $g^{11}$ acts on $C_9$ with 14 fixed points,
too many for an order 3 automorphism on a curve of genus 9.   Thus $$X/\langle g^{33} \rangle\cong {\bf F}_4,$$ there is a
$g^{33}$-invariant elliptic fibration $$\psi:X\to {\bf P}^1$$ and ${\rm Fix}(g^{33})$ consists of a section $R$ of $\psi$ and
a curve $C_{10}$ of genus $10$ which is a $3$-section.   The automorphism $\bar{g}$ of ${\bf F}_4$ induced by $g$ preserves
the unique ruling, so $g$ preserves the elliptic fibration. 
Since $\bar{g}^{33}$ acts trivially on ${\bf F}_4$,
$g^{33}$ acts trivially on the base ${\bf P}^1$, and hence the orbit of a fibre under the action of  $g|{\bf P}^1$ has length 1, 3, 11 or 33 . 
By Lemma \ref{nsym2} a fibre of $\psi$
is of type $I_0$ (smooth), $I_1$, $I_2$, $II$ or $III$. Claim that $\psi$ has no fibre of type $I_2$ or $III$. If a fibre $F$ is of type $III$, then its orbit under $g|{\bf P}^1$ has length 1 or 3, then $g^3$ preserves $F$, hence $g^6$ preserves both components of $F$ and, together with an invariant ample class, preserves 3 linearly independent classes, hence $[g^{6*}]\supset [1,\,1,\,1]$, impossible. If a fibre $F$ is of type $I_2$, then its orbit under $g|{\bf P}^1$ has length 1, 3 or 11, then $g^6$ or $g^{22}$ would have 3 linearly independent invariant classes, again impossible. Next, claim that 
there is an orbit of singular fibres of length 11. Otherwise, all orbits of singular fibres would have length 1 or 3, then $g^3$ would preserve all fibres, hence fix the curve $R$ and induces on a general smooth fibre an automorphism of order 22. But in any characteristic no elliptic curve admits an automorphism of such high order that fixes a point. 
If there are two orbits of length 11 of singular fibres of type $I_1$, then $g^{11}$ would preserve all fibres, hence fix $R$ and the singular points of singular fibres, then $e(g^{11})>14$.
Thus there is one orbit of length 11 of singular fibres of type $II$. If $g$ preserves two fibres of type $I_1$, then the same argument as above would yield $e(g^{11})>14$. Thus $g$ preserves one fibre of type $II$ and a smooth fibre.
 This proves (1), (2) and (3).

The statement (4) follows from (3) and the fact that ${\rm Fix}(g^{11})$ has Euler number 14 and is contained in $R\cup C_{10}$. 

To see (5), take $R$ as the 0-section of $\psi$. Then on each smooth fibre $F$, $g^{11}$ induces an order 6 automorphism, fixing the point $F\cap R$ and rotating the three 2-torsions $C_{10}\cap F$.
\end{proof}

\begin{lemma}\label{66'} Let $g$ be an automorphism of order $66$ of a K3
surface $X$ in characteristic $p=11$. If $$[g^{*}]=[1,\,\zeta_{66}:20,\, 1],$$ then
\begin{enumerate}
\item there is a $g^{}$-invariant elliptic
fibration $\psi:X\to {\bf P}^1$ with $12$ cuspidal fibres, say $F_{\infty}$, $F_{t_1}, \ldots, F_{t_{11}}$;
\item  ${\rm Fix}(g^{33})$
consists of a section $R$ of $\psi$ and a curve $C_{10}$ of genus
$10$ which is a $3$-section passing through each cusp with multiplicity $3$;
\item the action of $g$ on the base ${\bf P}^1$ is of order $11$, fixes $1$ point, say $\infty$, and makes the $11$ points $t_1, \ldots, t_{11}$  form a single orbit;
\item $${\rm Fix}(g^{11})=R\cup\,\{{\rm the\,
cusps\, of\, the\, 12\, cuspidal\, fibres}\};$$
\item ${\rm Fix}(g)$  consists of the  
$2$ points,  $$R\cap F_{\infty},\,\,C_{10}\cap F_{\infty}.$$
\end{enumerate}
\end{lemma}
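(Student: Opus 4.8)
Lemma~\ref{66'} is the characteristic-$11$ analogue of Lemma~\ref{66}, so the natural plan is to run the same argument and track precisely where the wild prime $p=11$ forces a different conclusion. The key structural invariant that changes is the behavior of the order-$11$ action on the base $\mathbf{P}^1$: in the tame case the automorphism $g|\mathbf{P}^1$ of order $11$ fixes two points, whereas in characteristic $11$ an order-$11$ (hence wild, $=p$) automorphism of $\mathbf{P}^1$ is conjugate to $t\mapsto t+1$ and fixes only the single point $\infty$. This is the one place where I expect the proofs to genuinely diverge, and it is exactly the discrepancy between items (3) and (5) of the two lemmas.

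\medskip
\textbf{Steps I would carry out.} First I would reduce the lattice-theoretic input to its characteristic-free core. The eigenvalue hypothesis $[g^{*}]=[1,\,\zeta_{66}:20,\,1]$ gives $[g^{33*}]=[1,\,-1.20,\,1]$ exactly as before, so $h=g^{33}$ satisfies the hypotheses of Lemma~\ref{nsym2} (which is already stated for $p\neq 2$ and so applies at $p=11$). The dimension count ruling out $\mathrm{Fix}(g^{33})$ being a genus-$9$ curve, via the action of the order-$3$ automorphism $g^{11}$ with $14$ fixed points, is purely about curve automorphisms and goes through verbatim; this yields $X/\langle g^{33}\rangle\cong\mathbf{F}_4$, the $g$-invariant elliptic fibration $\psi$, the section $R$, and the genus-$10$ trisection $C_{10}$, proving (2). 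The argument that $\psi$ has no fibre of type $I_2$ or $III$, and that the singular fibres consist of a single length-$11$ orbit of cuspidal ($\mathrm{type}\ II$) fibres together with one $g$-preserved cuspidal fibre, is again an eigenvalue/invariant-class argument that does not reference the characteristic, so (1) survives.

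\medskip
\textbf{Where $p=11$ intervenes.} Here is the crux. In the tame lemma the order-$11$ base action fixes \emph{two} points $\infty$ and $0$, one carrying a smooth fibre and one carrying the twelfth cuspidal fibre; in characteristic $11$ the additive action $t\mapsto t+1$ fixes only $\infty$. Consequently the unique $g$-fixed fibre must sit over $\infty$, and it is the twelfth cuspidal fibre $F_\infty$ rather than a smooth fibre. I would argue that the single $g$-invariant fibre cannot be smooth: the length-$11$ orbit already accounts for $11$ cuspidal fibres, and the total Euler-number / fibre-count bookkeeping on the K3 (the $12$ cuspidal fibres contributing the full second Betti number together with $R$ and $C_{10}$) forces the invariant fibre to be the remaining cuspidal one, giving (1) and (3) with a single fixed base point. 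Statement (4) then follows formally, exactly as in Lemma~\ref{66}, from (3) together with $e(g^{11})=14$ and $\mathrm{Fix}(g^{11})\subset R\cup C_{10}$.

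\medskip
\textbf{The fixed locus of $g$.} For (5) I would again take $R$ as the zero-section and analyze $g$ near the unique invariant fibre $F_\infty$. Since $g$ permutes the $11$ other cuspidal fibres cyclically with no base fixed point besides $\infty$, the points $R\cap F_0$ and $C_{10}\cap F_0$ that appeared in the tame case (item (5) of Lemma~\ref{66}) simply vanish, because there is no second fixed base point. What remains are the intersections of the invariant fibre $F_\infty$ with $R$ and with $C_{10}$; since $g$ acts on $F_\infty$ fixing the cusp region and the section point, one checks that $\mathrm{Fix}(g)=\{R\cap F_\infty,\;C_{10}\cap F_\infty\}$, the $2$ points claimed. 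I expect the main obstacle to be making rigorous that the wild order-$11$ action on the base has no second fixed point and simultaneously that the $g$-fixed fibre is forced to be cuspidal rather than smooth — i.e.\ correctly reconciling the fibre-type bookkeeping with the additive-group geometry of $\mathbf{P}^1$ in characteristic $11$. Everything else transfers from the proof of Lemma~\ref{66} with only cosmetic changes.
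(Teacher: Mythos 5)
Your proposal is correct and follows essentially the same route as the paper: rerun the proof of Lemma \ref{66}, observing that the fixed-locus arguments remain valid because $g^{33}$ and $g^{11}$ are tame in characteristic $11$, and that the only genuine change is that the wild order-$11$ action on the base $\mathbf{P}^1$ is conjugate to $t\mapsto t+1$ and fixes a single point, which forces the unique invariant fibre to be the twelfth cuspidal fibre $F_\infty$ and cuts $\mathrm{Fix}(g)$ down to the two points $R\cap F_\infty$ and $C_{10}\cap F_\infty$. The paper phrases the last step as $\mathrm{Fix}(g)\subset F_\infty\cap\mathrm{Fix}(g^{33})=F_\infty\cap(R\cup C_{10})$, which is exactly the containment your argument uses implicitly.
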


\begin{proof} The proof of Lemma \ref{66}, with a modification, will work here. 
Note first that the automorphisms $g^{33}$,  $g^{11}$ are tame in  characteristic $p=11$, so the Lefschetz fixed point formula holds for them and the argument using their fixed loci is valid.

We compute
$[g^{33*}]=[1,\,-1.20,\, 1]$ and apply Lemma \ref{nsym2} to $h=g^{33}$. We also compute 
$$[g^{11*}]=[1,\,(\zeta_{6}:2).10,\, 1],\,\,\,e(g^{11})=14.$$ 
Note that $${\rm Fix}(g^{d})\subset {\rm Fix}(g^{33})$$ for any $d$ dividing 33.
If ${\rm Fix}(g^{33})$ is a curve $C_{9}$ of genus $9$, then $g^{11}$ acts on $C_9$ with 14 fixed points,
too many for an order 3 automorphism on a curve of genus 9.   Thus $$X/\langle g^{33} \rangle\cong {\bf F}_4,$$ there is a
$g^{33}$-invariant elliptic fibration $$\psi:X\to {\bf P}^1$$ and ${\rm Fix}(g^{33})$ consists of a section $R$ of $\psi$ and
a curve $C_{10}$ of genus $10$ which is a $3$-section.   The automorphism $\bar{g}$ of ${\bf F}_4$ induced by $g$ preserves
the unique ruling, so $g$ preserves the elliptic fibration. Note that
$g^{33}$ acts trivially on the base ${\bf P}^1$. 
By Lemma \ref{nsym2} a fibre of $\psi$
is of type $I_0$ (smooth), $I_1$, $I_2$, $II$ or $III$. By the same argument as in Lemma \ref{66}, $\psi$ has no fibre of type $I_2$ or $III$ and 
there is an orbit of singular fibres of length 11. 
If there are two orbits of length 11 of singular fibres of type $I_1$, then $g^{11}$ would preserve all fibres, hence fix $R$ and the singular points of singular fibres, then $e(g^{11})>14$.
Thus there is one orbit of length 11 of singular fibres of type $II$. If $g$ preserves two fibres of type $I_1$, then the same argument as above would yield $e(g^{11})>14$. Thus $g$ preserves one fibre of type $II$. Since  $g|{\bf P}^1$ is of order 11 and an wild automorphism on ${\bf P}^1$ fixes only one point, we see that $g$ preserves no other fibre.
 This proves (1), (2) and (3).

The statement (4) follows from (3). Since $g^{11}$ is tame, ${\rm Fix}(g^{11})$ has Euler number 14 and is contained in $R\cup C_{10}$.

To see (5), note that ${\rm Fix}(g)$ is contained not only in the cuspidal fibre $F_{\infty}$ but also in ${\rm Fix}(g^{33})=C_{10}\cup R$.
\end{proof}

\section{the Tame Case}

Throughout this section, we assume that the characteristic $p>0$,
$p\nmid 66$, and $g$ is an automorphism of order $66$ of a K3
surface.
By \cite{K} Lemma 4.2 and 4.4, $g$ is purely non-symplectic, i.e.
 ${\rm  ord}(g)= 1.66$.

\begin{lemma} The eigenvalues of $g^*$ on the second cohomology is given by
\label{g*} $$[g^{*}]=[1,\,\zeta_{66}:20,\,1].$$
\end{lemma}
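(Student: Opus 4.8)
The plan is to pin down the characteristic polynomial $\det(tI-g^*)$ of $g^*$ on $H^2_{\rm et}(X,\bbQ_l)$, a monic degree-$22$ product of cyclotomic polynomials since $g$ has finite order. Because ${\rm ord}(g)=1.66$, the map $\la g\ra\to\GL(H^0(X,\Omega_X^2))$ is injective, so $g^*\omega_X=\zeta_{66}\,\omega_X$ for a primitive $66$th root of unity. By Proposition \ref{integral}(3) the eigenvalue $\zeta_{66}$ then occurs on $H^2_{\rm et}(X,\bbQ_l)$, and by Proposition \ref{integral}(1) all $\phi(66)=20$ of its conjugates occur with equal multiplicity; since $2\cdot20>22$ this multiplicity is $1$. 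Thus $\Phi_{66}\mid\det(tI-g^*)$ and exactly two further eigenvalues remain. By Proposition \ref{integral}(2) one of them is $1$, and the quotient of the characteristic polynomial by $\Phi_{66}(t)(t-1)$ is a monic degree-$1$ integer polynomial whose root is a root of unity, hence $\pm1$. So the last eigenvalue is $\mu\in\{1,-1\}$, and it remains only to exclude $\mu=-1$.

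Next I would record the geometric meaning of $\mu$. The space $H^2_{\rm et}(X,\bbQ_l)^{g^{33}}$ is spanned by the $g^*$-eigenvectors whose eigenvalue is a $33$rd root of unity; as $\zeta_{66}^{33}=-1$ and $\mu^{33}=\mu$, this space has dimension $2$ when $\mu=1$ and dimension $1$ when $\mu=-1$ (one may cross-check via Proposition \ref{trace} and Lemma \ref{sum}, which give $e(g)=2+\mu$ since $R(66)=-1$). Thus $\mu=1$ is precisely the condition $\dim H^2_{\rm et}(X,\bbQ_l)^{g^{33}}=2$ under which Lemma \ref{nsym2} applies to the non-symplectic involution $g^{33}$, and the task is exactly to kill the rank-$1$ alternative.

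To that end, suppose $\mu=-1$, so $\Pic(X)^{g^{33}}$ has rank $1$. Since $g^{33}$ is a tame non-symplectic involution with nonempty fixed locus, the quotient $Y=X/\la g^{33}\ra$ is a smooth rational surface of Picard number $1$, i.e. $Y\cong\bbP^2$. Hence $\pi:X\to\bbP^2$ is a double cover, branched along a smooth sextic $B$ (adjunction $K_X=\pi^*(K_{\bbP^2}+\tfrac12 B)=0$ forces $\deg B=6$), with $B\cong X^{g^{33}}$ of genus $10$ and $g^{33}$ the covering involution. Since the kernel of $\la g\ra\to\Aut(\bbP^2)=\PGL_3$ is exactly the deck group $\la g^{33}\ra$, the image $\bar g\in\PGL_3$ has order $33$ and preserves $B$; the contradiction will come from showing no such $\bar g$ exists.

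The hard part is this last step, but it is a finite check once one diagonalizes a lift of $\bar g$ as ${\rm diag}(1,\alpha,\beta)$ with $\alpha=\zeta_{33}^a,\ \beta=\zeta_{33}^b$ (diagonalizable since $p\neq3,11$, so $33$ is prime to $p$). If two eigenvalues coincide, $\bar g$ fixes a line pointwise and every $\bar g$-invariant sextic form, having all monomials of a single $\bar g$-weight, is divisible by a power of a linear form and so is singular; thus $B$ cannot be smooth. Otherwise the only fixed points are the three coordinate points, and writing the invariant equation as a weight-$w$ eigenform, smoothness of $B$ at each fixed point forces $w$ to be the weight of the constant or of one of the two tangent monomials there, i.e.
$$w\in\{0,a,b\},\quad w\in\{6a,5a,5a+b\},\quad w\in\{6b,5b,a+5b\}\pmod{33}.$$
A short analysis of these nine alternatives shows every solution has $\{a,b\}=\{11,22\}$, so $\bar g={\rm diag}(1,\zeta_3,\zeta_3^{2})$ has order $3$, not $33$ -- a contradiction. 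This excludes $\mu=-1$ and yields $[g^*]=[1,\zeta_{66}:20,1]$. All steps use only $p\neq2,3,11$, so the argument is characteristic-free.
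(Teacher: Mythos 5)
Your proof is correct, and while it opens the same way as the paper's, the crucial step—excluding the eigenvalue $-1$—follows a genuinely different route. Both arguments do the same cyclotomic bookkeeping to reduce to $[g^*]=[1,\,\zeta_{66}:20,\,\pm1]$, and both then pass to the same geometric picture: if the last eigenvalue is $-1$, the $g^{33*}$-invariant part of $H^2$ is one-dimensional, ${\rm Fix}(g^{33})$ is a genus-$10$ curve, and $X/\langle g^{33}\rangle\cong {\bf P}^2$ with smooth sextic branch curve. From there the paper works with the \emph{order-$3$} part of the induced action: it computes ${\rm Fix}(g^{11})$ ($12$ points), shows ${\rm Fix}(g^{22})=C_{k+4}\cup\{2k \hbox{ points}\}$ contains no ${\bf P}^1$, invokes the Hodge Index Theorem to force $k\le 1$, kills $k=0$ by a rank count, and kills $k=1$ because the induced order-$3$ automorphism of ${\bf P}^2$ would have to fix a conic plus a point, contradicting the classification of fixed loci of order-$3$ planar automorphisms. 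You instead work with the \emph{full order-$33$} action $\bar g\in\PGL_3$ on the quotient (the kernel being exactly the deck group), and prove outright that no order-$33$ element of $\PGL_3$ preserves a smooth plane sextic, by diagonalizing $\bar g$ as ${\rm diag}(1,\zeta_{33}^a,\zeta_{33}^b)$ and comparing the weight $w$ of the invariant form with the weights that smoothness forces at the three fixed coordinate points. Your "nine alternatives" claim, though only asserted, is a finite congruence check and it does hold: every solution mod $33$ has $a,b\in\{0,11,22\}$ or $a\equiv b$, i.e.\ projective order dividing $3$ or a repeated eigenvalue, and you correctly dispose of the repeated-eigenvalue case since then every invariant sextic acquires a linear component and hence a singular point. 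What your route buys: it bypasses the Hodge Index Theorem, the entire analysis of ${\rm Fix}(g^{22})$, and the $k=0,1$ case division, replacing them with an elementary, self-contained linear-algebra argument that also isolates a clean standalone fact (no smooth plane sextic admits a linear automorphism of order $33$ when $33$ is prime to the characteristic). What the paper's route buys: it stays entirely within the fixed-point and lattice-theoretic techniques used throughout the paper and its companions, so it recycles machinery (Lefschetz fixed point counts, Hodge index, fixed-locus classifications) rather than introducing an explicit equational computation.
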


\begin{proof}
By Proposition
\ref{integral} the action of $g^*$ on $H_{\rm et}^2(X,\bbQ_l)$ has
$\zeta_{66}\in \overline{\bbQ_l}$ as an eigenvalue. Thus $[\zeta_{66}:20]\subset[g^*]$. 
Suppose that 
$$[g^{*}]=[1,\,\zeta_{66}:20,\,-1].$$ Then $$[g^{33*}]=[1,\,-1:20,\,-1], \quad e(g^{33})=-18.$$ Since the $g^{33*}$-invariant subspace of $H_{\rm
et}^2(X,\bbQ_l)$  has dimension 1, we see that 
$${\rm Fix}(g^{33})=C_{10},$$
 a smooth curve of genus 10, and the quotient surface
$$X/\langle g^{33}\rangle\cong {\bf P}^2.$$ 
The image ${C_{10}'}\subset  {\bf P}^2$ is a smooth sextic curve. Since $e(g^{11})=12$ and 
$${\rm Fix}(g^{11})\subset {\rm Fix}(g^{33})=C_{10},$$we see that  ${\rm Fix}(g^{11})$ consists of 12 points. On the other hand, $g^{22}$ has
$$[g^{22*}]=[1,\,(\zeta_{3}:2).10,\, 1],\quad e(g^{22})=-6.$$
Since the $g^{*22}$-invariant subspace of $H_{\rm
et}^2(X,\bbQ_l)$  has dimension 2, ${\rm Fix}(g^{22})$ consists of a curve $C$ of genus $>1$, at most one ${\bf P}^1$ and some isolated  points. If ${\rm Fix}(g^{22})$ contains  a ${\bf P}^1$, then the action of  $g$ on ${\rm Fix}(g^{22})$   preserves $C$ and the ${\bf P}^1$, so the $g^{*}$-invariant subspace of $H_{\rm
et}^2(X,\bbQ_l)$  has dimension at least 2, a contradiction. Here we use the fact that
the Chern class map 
$$c_1: \Pic(X)\to H_{\rm crys}^2(X/W)$$
is injective and the fact that the characteristic polynomial of $g^*$ does not depends on the choice of cohomology.
Thus  ${\rm Fix}(g^{22})$ contains no ${\bf P}^1$ and 
$${\rm Fix}(g^{22})= C_{k+4}\cup\{\, 2k\,\, {\rm points}\,\}$$
for a smooth curve $ C_{k+4}$ of genus $k+4$. Note that
$$ C_{k+4}\cap C_{10}\subset {\rm Fix}(g^{22})\cap {\rm Fix}(g^{33})={\rm Fix}(g^{11}),$$ thus the intersection number $$C_{k+4}.C_{10}\le 12.$$ Then the
Hodge Index Theorem gives
$$(C_{k+4}^2)( C_{10}^2)=18(2k+6)\le (C_{k+4}. C_{10})^2\le 12^2,$$
thus $k\le 1$. 

\medskip
Suppose that $k=0$ and ${\rm Fix}(g^{22})= C_{4}$. Since $C_{4}^2/ C_{10}^2$ is not a square of a rational number, the two curves
$C_4$ and $C_{10}$ are linearly independent in Pic$(X)\otimes{\mathbb Q}$,  giving two linearly independent $g^{*}$-invariant vectors of $H_{\rm
et}^2(X,\bbQ_l)$, a contradiction.

\medskip
Suppose that $k=1$ and ${\rm Fix}(g^{22})= C_{5}\cup\{\, 2\,\, {\rm points}\, \}$.
Since $(C_{5}^2)( C_{10}^2)=144$, we have the equality in the
Hodge Index Theorem and $$C_{5}.C_{10}= 12.$$ Since $g^{33}|C_5=g^{11}|C_5$, the action of $g^{33}$ on $C_5$ has 12 fixed points, hence the image $$C_5'\subset X/\langle g^{33}\rangle\cong {\bf P}^2$$ has genus 0. Since $C_{5}'.C_{10}'= 12$, $C_5'$ must be a smooth conic. Consider the automorphism  $\bar{g}^{11}$ of $X/\langle g^{33}\rangle$ induced by $g^{11}$. It has order 3 and its fixed locus $ {\rm Fix}(\bar{g}^{11})\subset {\bf P}^2$ is the image of the locus 
$$ {\rm Fix}(g^{11})\cup {\rm Fix}(g^{22})= {\rm Fix}(g^{22}),$$
hence $ {\rm Fix}(\bar{g}^{11})$ consists of the conic $C_5'$ and the point which is the image of the two points in ${\rm Fix}(g^{22})$ . But the fixed locus of any order 3 automorphism of ${\bf P}^2$ is either 3 isolated points or the union of a point and a line, a contradiction.
\end{proof}

\bigskip
\noindent {\bf Proof of Theorem \ref{main1}.} 

 By Lemma \ref{g*}, $[g^{*}]=[1,\,\zeta_{66}:20,\,1]$.  We can apply Lemma \ref{66}, and will use the elliptic structure and the notation.
Let
$$y^2+x^3+A(t_0,t_1)x+B(t_0,t_1) = 0$$ be the Weierstrass equation of  the $g$-invariant elliptic
pencil, where $A$ (resp. $B$) is a binary form of degree  $8$
(resp. $12$). By Lemma \ref{66}, $g$ leaves invariant the
section $R$ and the action of $g$ on the base of the fibration
$\psi:X\to {\bf P}^1$ is of order 11. After a linear change of the
coordinates $(t_0,t_1)$ we may assume that $g$ acts on the base by
$$g:(t_0,t_1)\mapsto (t_0,\zeta_{11}t_1)$$ for some primitive 11th root of unity $\zeta_{11}$.
We know that $g$ preserves one cuspidal fibre $F_0$
and makes the remaining 11 cuspidal fibres form one orbit. Thus
the discriminant polynomial \begin{equation}\label{Delta}
\Delta =
-4A^3-27B^2=ct_1^2(t_1^{11}-t_0^{11})^2
 \end{equation} for some constant
$c\in k$, as it must have one double root (corresponding to the
fibres $F_0$) and one orbit of double roots. From the equality (\ref{Delta}), it is easy to see that $A$ is not a non-zero constant. If deg$(A)>0$, then the zeros of $A$ correspond to either cuspidal fibres (which may contain a singular point of $X$, i.e. yield a reducible fibre) or
nonsingular fibres with ``complex multiplication'' of
order 6. This set has cardinality at most 8, but invariant with respect to the order 11
action of $g|{\bf P}^1$, impossible. Thus $A
= 0$. Then the above Weierstrass equation can be written in
the form
\begin{equation}\label{wform}
y^2+x^3+at_1(t_1^{11}-t_0^{11}) = 0
 \end{equation} for some constant $a$. A suitable
linear change of variables makes $a=1$ without changing the action
of $g$ on the base. Thus $$X \cong X_{66}$$ as an elliptic surface. Let $$t=t_1/t_0.$$ Choose a primitive 66th root of unity $\zeta_{66}$ such that
\begin{equation}\label{g*omega}
g^*\Big(\frac{dx\wedge dt}{y}\Big)=\zeta_{66}^5\frac{dx\wedge dt}{y},\,\,\,\,g^{11*}\Big(\frac{dx\wedge dt}{y}\Big)=\zeta_{66}^{55}\frac{dx\wedge dt}{y}.\end{equation}  Since $g^{11}$ is of order 6, acts trivially
on the base  and fixes the section $R$,  it is a complex multiplication of order 6 on a general fibre, so
$$g^{11}(x, y, t)=(\zeta_{66}^{22}x, -y, t).$$
Here, the other primitive 3rd root of unity $\zeta_{66}^{44}$ cannot appear as the coefficient of $x$ by (\ref{g*omega}).  We will analyse the local action of $g$
at the fixed point $(x, y, t)=(0,0,0)$, the cusp of
$F_0$. We  first determine the linear terms of $g$, then infer that the higher degree terms must vanish. Write the linear terms of $g$ as follows:
$$g(x, y, t)=(\zeta_{66}^ax, \zeta_{66}^by,
\zeta_{66}^ct).$$
 Since the Weierstarass equation (\ref{wform}) is invariant under $g$, we have the following system of congruence modulo 66:
$$3a\equiv 2b\equiv 12c\equiv c$$
$$11a\equiv 22$$
$$11b\equiv 33.$$
The solutions are
$$a\equiv 2+6a'$$
$$b\equiv 3+9a'\,\,(a'\,\,{\rm even})\,\,{\rm or}\,\,36+9a'\,\,(a'\,\,{\rm odd})$$
$$c\equiv 6+18a'$$ for some integer $a'$.
On the other hand, by (\ref{g*omega})
$$5\equiv a+c-b \,\,\,({\rm mod}\,\,66).$$
This congruence equation is satisfied by the solution $a\equiv 2$, $b\equiv 3$, $c\equiv 6$, but by no other solution among the above solutions.
This completes the proof of Theorem \ref{main1} in the tame case.

\section{the Complex Case}

We may assume that $X$ is projective, since a non-projective complex K3 surface cannot admit a non-symplectic
automorphism of finite order (see \cite{Ueno}, \cite{Nik}) and
its automorphisms of finite order are symplectic, hence of order
$\le 8$.  Now the same  proof goes, once $H^2_{\rm
et}(X,\bbQ_l)$ is replaced by $H^2(X,\bbZ)$.

\section{in characteristic $p=11$}

Throughout this section, we assume that the characteristic $p=11$ and $g$ is an automorphism of order $66$ of a K3 surface. 
By \cite{DK3} we know that ${\rm  ord}(g)= 11.6$.

\begin{lemma}\label{11.6}   ${\rm  ord}(g)= 11.6$.
\end{lemma}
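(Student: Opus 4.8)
The plan is to separate the wild and tame parts of $\langle g\rangle$. Since $g$ has order $66=6\cdot 11$, the power $g^{6}$ has order $11=p$, and by the remark in the introduction an automorphism whose order is a power of $p$ acts trivially on the one–dimensional space $H^{0}(X,\Omega^{2}_{X})$; thus $g^{6}$ is symplectic. Consequently $\langle g^{6}\rangle\cong\mathbb{Z}/11$ lies in the kernel $K$ of the character $\chi\colon\langle g\rangle\to \mathrm{GL}(H^{0}(X,\Omega^{2}_{X}))=k^{\ast}$, so $11\mid|K|=m$ and the non-symplectic order $n=66/m$ divides $6$. It remains to prove $n=6$, equivalently that the tame automorphism $g^{11}$ of order $6$ acts on a generator $\omega_{X}$ of $H^{0}(X,\Omega^{2}_{X})$ by a primitive $6$th root of unity. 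Since $\langle g^{11}\rangle\cong\mathbb{Z}/2\times\mathbb{Z}/3$ with factors $\langle g^{33}\rangle$ and $\langle g^{22}\rangle$, this amounts to showing that both $g^{33}$ and $g^{22}$ are non-symplectic.

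Equivalently, I must rule out $m\in\{22,33,66\}$, i.e. the existence on $X$ of a symplectic automorphism of order $22$, $33$ or $66$. In each such case the would-be symplectic automorphism $\tau$ (an appropriate power of $g$) has a tame symplectic power of order $2$ or $3$ commuting with the wild symplectic automorphism $\sigma:=g^{6}$ of order $11$, and moreover $\sigma$ is itself a power of $\tau$. A tame symplectic involution of a K3 surface has exactly $8$ isolated fixed points, and a tame symplectic automorphism of order $3$ exactly $6$ (Nikulin's counts, valid here for the tame powers since $p\neq 2,3$). As $\sigma$ commutes with these symplectic powers it permutes their fixed-point sets, and because $|\sigma|=11$ exceeds both $8$ and $6$ every orbit is a singleton; hence $\sigma$ fixes all $8$ (resp.\ $6$) points, so ${\rm Fix}(\sigma)$ contains at least $6$ isolated points.

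I would then contradict this with the structure of ${\rm Fix}(\sigma)$ for the wild $\mathbb{Z}/11$–action: by the analysis of wild $p$-cyclic actions in \cite{DK3}, $\sigma$ has only one fixed point on the base of any $\sigma$-invariant fibration and fixes only a short list of points on the single invariant fibre, in particular fewer than $6$ points. This is where the real content lies, and it is the main obstacle, because the topological Lefschetz formula of Proposition \ref{trace} is unavailable for the wild automorphism $\sigma$ (unlike for the tame powers $g^{11},g^{22},g^{33}$ used elsewhere), so one must invoke the explicit description of the fixed scheme of a wild order-$11$ automorphism from \cite{DK3} rather than a naive Euler-characteristic computation. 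Once $g^{33}$ and $g^{22}$ are seen to be non-symplectic, $\chi$ is faithful on $\langle g^{11}\rangle$, whence $n=6$, $m=11$, and ${\rm ord}(g)=11.6$.
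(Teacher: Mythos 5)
Your opening step is exactly the paper's: $g^{6}$ has order $11=p$, hence is symplectic since $k$ has no $11$th root of unity, so $11\mid m$ and $n\mid 6$. Where you diverge is in ruling out $m\in\{22,33,66\}$. The paper does this in one line, by quoting the theorem of \cite{DK3} that in characteristic $11$ the maximum finite order of a symplectic automorphism is $11$, which instantly forbids a symplectic power of $g$ of order $22$, $33$ or $66$. Your substitute argument has a genuine gap at its final step: the fact you attribute to \cite{DK3} --- that the wild automorphism $\sigma=g^{6}$ ``fixes only a short list of points on the single invariant fibre, in particular fewer than $6$ points'' --- is false. A wild order-$11$ automorphism of a K3 surface can fix an entire curve pointwise, and the paper's own model $Y_{66}$ shows this: the order-$11$ subgroup of $\langle h_{66}\rangle$ is generated by $(x,y,t)\mapsto(x,y,t+1)$, which has no fixed point with $t$ finite, but in the coordinates $x'=xs^{4}$, $y'=ys^{6}$, $s=1/t$ near infinity (where the equation reads $y'^{2}+x'^{3}+s-s^{11}=0$) it becomes $(x',y',s)\mapsto\bigl(x'/(1+s)^{4},\,y'/(1+s)^{6},\,s/(1+s)\bigr)$, which is the identity on the fibre $s=0$. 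Hence ${\rm Fix}(\sigma)$ contains the whole cuspidal fibre $F_{\infty}$ --- infinitely many points. So the contradiction you aim for (``at least $6$ fixed points'' versus ``fewer than $6$'') never materializes: as far as your argument goes, the $8$ (resp.\ $6$) fixed points of the tame symplectic power could simply lie on such a fixed curve.

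The idea can be salvaged, but it requires the correct structural input and one more step. What is true, and available from \cite{DK3}, is that $\sigma$ acts on the base of an invariant elliptic fibration with exactly one fixed point, so ${\rm Fix}(\sigma)$ is contained in the single invariant fibre $F_{\infty}$, which is a point or a cuspidal cubic. Writing $\rho$ for the tame symplectic power of order $2$ (resp.\ $3$), your orbit argument correctly places its $8$ (resp.\ $6$) isolated fixed points inside ${\rm Fix}(\sigma)\subseteq F_{\infty}$; since $\rho$ commutes with $\sigma$ it maps $F_{\infty}$ to itself, it cannot fix $F_{\infty}$ pointwise (its fixed locus is finite), and a nontrivial automorphism of a cuspidal cubic fixes the cusp and at most one further point --- so at most $2$ of those points can lie on $F_{\infty}$, a contradiction. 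With that correction your route closes, but note that it is not more self-contained than the paper's: it still rests on the structure theory of wild order-$11$ actions in \cite{DK3} together with the Nikulin-type fixed-point counts for tame symplectic automorphisms, whereas the paper's proof consists of the single citation of the maximality theorem.
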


\begin{proof} Any automorphism of order $p$ in characteristic $p$ is symplectic, as there is no $p$-th root of unity. Thus the symplectic order of $g$ must be a  multiple of 11. In characteristic 11 it is known \cite{DK3}  that 11 is the maximum possible among all orders of symplectic automorphisms of finite order.
\end{proof}

\begin{lemma}\label{g}  The eigenvalues of $g^*$ on the second cohomology is given by  $$[g^{*}]=[1,\,\zeta_{66}:20,\,1].$$
\end{lemma}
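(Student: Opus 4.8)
The plan is to split the order-$66$ element into its coprime parts and read off $[g^*]$ from the two pieces. By Lemma \ref{11.6} we have ${\rm ord}(g)=11.6$, so $g^{11}$ is a \emph{tame} purely non-symplectic automorphism of order $6$ (it acts on $H^0(X,\Omega_X^2)$ by a primitive $6$th root of unity) while $g^6$ is a \emph{wild} symplectic automorphism of order $11$. Since $66=6\cdot 11$ with $\gcd(6,11)=1$, the group $\langle g^*\rangle=\langle g^{11*}, g^{6*}\rangle$, so $H^2_{\rm et}(X,\bbQ_l)$ decomposes into joint eigenspaces for the commuting operators $g^{11*}$ and $g^{6*}$, and on each such space $g^*$ acts by the unique $66$th root of unity $\gamma$ with $\gamma^{11}$ the $g^{11*}$-eigenvalue and $\gamma^6$ the $g^{6*}$-eigenvalue (Chinese Remainder Theorem). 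Writing $r_d$ for the common multiplicity of a primitive $d$-th root of unity in $[g^*]$ ($d\mid 66$), the target $[g^*]=[1,\zeta_{66}:20,1]$ is the assertion $r_1=2$, $r_{66}=1$ and all other $r_d=0$, and it suffices to compute the eigenvalue lists of the two pieces and match them up via the power maps $\lambda\mapsto\lambda^6$ and $\lambda\mapsto\lambda^{11}$.

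First I would pin down the wild piece. The automorphism $g^6$ is symplectic of order $11=p$, and by the classification of such actions in characteristic $11$ (\cite{DK3}) its co-invariant part has rank $20$; equivalently $[g^{6*}]=[1.2,(\zeta_{11}:10).2]$, so $\dim H^2_{\rm et}(X,\bbQ_l)^{g^6}=2$. Reading this through $\lambda\mapsto\lambda^6$ (which sends a primitive $d$-th root to a primitive $(d/\gcd(d,6))$-th root) gives the single relation
$$r_1+r_2+2r_3+2r_6=2,$$
the $11$-part identity $r_{11}+r_{22}+2r_{33}+2r_{66}=2$ being its equivalent via the dimension count. Together with $r_1\ge 1$ from Proposition \ref{integral}(2), this forces the non-$11$ part $(r_1,r_2,r_3,r_6)$ to be $(2,0,0,0)$ or $(1,1,0,0)$.

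The main step is then to control the tame piece $h:=g^{11}$ of order $6$. Proposition \ref{integral}(3), applied to the \emph{tame} $h$, gives $[\zeta_6:2]\subset[h^*]$, so the multiplicity $m_6$ of a primitive $6$th root of unity in $[h^*]$ is at least $1$. The crux is to show that $h$ has \emph{no} eigenvalue $-1$ and \emph{no} primitive cube root of unity, i.e. that the multiplicities $m_2$ and $m_3$ of $-1$ and $\zeta_3$ vanish. I expect this to be the hard part, and it is the exact analogue of ruling out the eigenvalue $-1$ in the proof of Lemma \ref{g*}: the involution $h^3=g^{33}$ acts on $\omega_X$ by $-1$, so it is non-symplectic, and once one knows $\dim H^2_{\rm et}(X,\bbQ_l)^{g^{33}}=2$ one may apply Lemma \ref{nsym2} and the topological Lefschetz formula (Proposition \ref{trace}) to the tame maps $g^{33}$, $g^{22}$, $h$ (using Lemma \ref{sum} for the trace computations). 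A surplus $-1$- or $\zeta_3$-eigenvalue would produce extra $h^2$- or $g^{33}$-invariant classes; passing to the quotient surface by the relevant involution or order-$3$ map and invoking the Hodge Index Theorem together with the injectivity of $c_1:\Pic(X)\to H_{\rm crys}^2(X/W)$ yields a contradiction, exactly as in Lemma \ref{g*}. This forces $m_2=m_3=0$.

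Finally I would match the two pieces. Via $\lambda\mapsto\lambda^{11}$ one has $m_2=r_2+10r_{22}$ and $m_3=r_3+10r_{33}$, so $m_2=m_3=0$ gives $r_2=r_{22}=r_3=r_{33}=0$; the wild relation then reads $r_1+2r_6=2$, whence $(r_1,r_6)=(2,0)$ since $r_1\ge1$, and the dimension count leaves $r_{11}+2r_{66}=2$. As $m_6=r_6+10r_{66}=10r_{66}\ge1$ we get $r_{66}\ge1$, forcing $r_{66}=1$ and $r_{11}=0$. Hence all $r_d$ vanish except $r_1=2$ and $r_{66}=1$, that is
$$[g^*]=[1,\,\zeta_{66}:20,\,1],$$
as claimed. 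In the joint-eigenspace picture this says the two-dimensional invariant part of $g^6$ coincides with the eigenvalue-$1$ part of $h$, while on the twenty-dimensional co-invariant part of $g^6$ the operator $h$ acts by primitive $6$th roots, so $g^*$ acts there by primitive $66$th roots.
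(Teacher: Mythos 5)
Your overall skeleton agrees with the paper's: the Mathieu structure of the wild part $g^6$ from \cite{DK3} giving $\dim H^2_{\rm et}(X,\bbQ_l)^{g^6}=2$, the coprime-factor bookkeeping with multiplicities $r_d$, the constraint $r_1\ge 1$ from an invariant ample class, and the use of $\zeta_6\in[g^{11*}]$ (Proposition \ref{integral}(3)). Your final accounting is also correct \emph{as conditional logic}. But the step you yourself flag as the crux --- proving $m_2=m_3=0$ for $h=g^{11}$ --- is a genuine gap, and the sketch you give for it would not go through. The Lemma \ref{g*}-style argument is tailored to one specific configuration, $[g^*]=[1,\zeta_{66}:20,-1]$, in which $\dim H^2_{\rm et}(X,\bbQ_l)^{g^{33}}=1$, ${\rm Fix}(g^{33})$ is a single genus-$10$ curve, and $X/\langle g^{33}\rangle\cong \mathbf{P}^2$ carries a sextic; the Hodge-index estimate lives on that geometry. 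The configurations you must exclude to get $m_2=m_3=0$ are far more numerous: under your own constraints they include $[1,1,\zeta_{33}:20]$ (where $g^{33}$ acts \emph{trivially} on $H^2$), $[1,-1,(\zeta_{11}:10).2]$ (where $g^{22}$ acts trivially), $[1,1,(\zeta_{22}:10).2]$, $[1,-1,\zeta_{33}:20]$, and so on. In these cases the $g^{33}$-invariant subspace has dimension $21$ or $22$, not $1$ or $2$, so there is no quotient $\mathbf{P}^2$, no sextic, and no index computation; your parenthetical ``once one knows $\dim H^2_{\rm et}(X,\bbQ_l)^{g^{33}}=2$'' assumes exactly what is in question (in the bad cases that dimension is anything but $2$). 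The paper kills precisely these configurations with a tool you never invoke: faithfulness of the representation of $\Aut(X)$ on $H^2_{\rm et}$ (\cite{Ogus} Corollary 2.5, \cite{K} Theorem 1.4), which forces $g^*$ to have order exactly $66$ and cuts the list to $[1,\zeta_{66}:20,\pm1]$ and $[1,\zeta_{33}:20,-1]$; the latter dies because it has $\zeta_6\notin[g^{11*}]$, and only $[1,\zeta_{66}:20,-1]$ requires the fixed-locus machinery --- which is the one case where that machinery actually applies.

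It is worth noting that your own ingredients can be reordered into a correct, faithfulness-free proof, which shows how close you are. Both admissible non-$11$ parts $(2,0,0,0)$ and $(1,1,0,0)$ have $r_6=0$, so the inequality $m_6=r_6+10r_{66}\ge 1$, which you do establish, already forces $r_{66}=1$ and hence $r_{11}=r_{22}=r_{33}=0$ at the outset; this leaves only $[1,1,\zeta_{66}:20]$ and $[1,-1,\zeta_{66}:20]$, and the second is exactly the configuration that the Lemma \ref{g*} argument (valid here because $g^{33}$, $g^{22}$, $g^{11}$ are tame in characteristic $11$, as the paper notes) rules out. In other words: apply your last constraint first, and the unproved general claim $m_2=m_3=0$ shrinks to the single case your cited method can handle. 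As written, however, the proposal's central step is asserted rather than proved, and for most of the configurations it must cover, the suggested method fails.
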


\begin{proof} In  characteristic $p=11$ it was proved in \cite{DK3} Proposition 4.2 that the representation on $H_{\rm et}^2(X,\bbQ_l)$ of a finite group of symplectic automorphisms is Mathieu. It follows that the order 11 automorphism $g^{6}$ has
$$[g^{6*}]=[1,\,(\zeta_{11}:10).2,\,1].$$ There is a $g$-invariant ample divisor class, so 1 appears in $[g^{*}]$. Since the representation of $\Aut(X)$ on $H_{\rm et}^2(X,\bbQ_l)$ is faithful (\cite{Ogus} Corollary 2.5, \cite{K} Theorem 1.4), $g^{*}|H_{\rm et}^2(X,\bbQ_l)$ has order 66 and we infer that $[g^{*}]$ is one of the following 3 cases:
$$[g^{*}]=[1,\,\zeta_{66}:20,\,\pm 1],\quad [1,\,\zeta_{33}:20,\,-1].$$
On the other hand, $g^{11}$ is tame and non-symplectic of order 6, hence $\zeta_6 \in [g^{11*}]$ by Proposition \ref{integral}. This excludes the last case.

\medskip
Suppose that $[g^{*}]=[1,\,\zeta_{66}:20,\,-1].$ This case can be ruled out by the  same proof as in Lemma \ref{g*}. Indeed, the automorphisms $g^{33}$, $g^{22}$, $g^{11}$ are tame in  characteristic $p=11$, so the Lefschetz fixed point formula holds for them and the argument using their fixed loci is valid.
\end{proof}

\bigskip
\noindent {\bf Proof of Theorem \ref{main2}.} 

The first statement follows from Lemma \ref{11.6}. It remains to prove the second.
 By Lemma \ref{g}, $[g^{*}]=[1,\,\zeta_{66}:20,\,1]$.  We can apply Lemma \ref{66'} and will use the elliptic structure and the notation.
Let
$$y^2+x^3+A(t_0,t_1)x+B(t_0,t_1) = 0$$ be the Weierstrass equation of  the $g$-invariant elliptic
pencil, where $A$ (resp. $B$) is a binary form of degree  $8$
(resp. $12$). By Lemma \ref{66'}, $g$ leaves invariant the
section $R$ and the action of $g$ on the base of the fibration
$\psi:X\to {\bf P}^1$ is of order 11.  Any wild automorphism of ${\bf P}^1$ is uni-potent, so after a linear change of the
coordinates $(t_0,t_1)$ we may assume that $g$ acts on the base by
$$g:(t_0,t_1)\mapsto (t_0,t_1+t_0).$$
Then $g$ preserves the cuspidal fibre $F_{\infty}$
and makes the remaining 11 cuspidal fibres form one orbit. Thus
the discriminant polynomial $$\Delta =
-4A^3-27B^2=ct_0^2(t_1^{11}-t_0^{10}t_1)^2$$ for some constant
$c\in k$, as it must have one double root (corresponding to the
fibres $F_{\infty}$) and one orbit of double roots. The zeros of $A$ correspond to either cuspidal fibres (which may contain a singular point of $X$) or
nonsingular fibres with ``complex multiplication'' of
order 6. Since this set is invariant with respect to the order 11
action of $g|{\bf P}^1$, we see that the only possibility is $A
= 0$. Then the above Weierstrass equation can be written in
the form
$$y^2+x^3+at_0(t_1^{11}-t_0^{10}t_1) = 0$$ for some constant $a$. A suitable
linear change of variables makes $a=1$ without changing the action
of $g$ on the base. Thus $$X \cong Y_{66}$$ as an elliptic surface. Let $$t=t_1/t_0.$$ 
Since $g$ has non-symplectic order 6, one can choose a primitive 6th root of unity $\zeta_{6}$ such that 
$$g^*\Big(\frac{dx\wedge dt}{y}\Big)=\zeta_{6}^{-1}\frac{dx\wedge dt}{y},\quad g^{11*}\Big(\frac{dx\wedge dt}{y}\Big)=\zeta_{6}\frac{dx\wedge dt}{y}.$$  Since $g^{11}$ is of order 6, acts trivially
on the base and fixes the section $R$, it is a complex multiplication of order 6 on a general fibre, so
$$g^{11}(x, y, t)=(\zeta_6^4x, \zeta_6^3y, t).$$ We know that $g(t)=t+1$,  so infer that
$$g(x, y, t)=(\zeta_{6}^2x, \zeta_{6}^3y, t+1).$$ 




\end{document}